\documentclass[11pt,reqno]{amsart}
% packages 
\usepackage{amssymb}
\usepackage{pgfplots}
\usepackage{csvsimple}
\usepackage{siunitx}
\usepackage{hyperref}
\usepackage{graphicx,color}
\usepackage[normalem]{ulem}
\usepackage{amsthm,amsxtra,amscd,mathtools}

% \usepackage{fullpage}
%  \usepackage[notcite,notref,color]{showkeys}
% graphics path
% \graphicspath{{figs/}}
% definitions
% Version: Apr. 22, 2017
% sets of numbers
\def\R{\mathbb{R}}

\def\N{\mathbb{N}}

% caligraphic letters

\def\cD{\mathcal{D}}

% greek letters
\def\a{\alpha}

\def\d{\delta}

% \def\i{\iota}

% other greek letters
\def\veps{\varepsilon}

\def\vphi{\varphi}
% domains and boundaries
\def\O{\Omega}

% function spaces, triangulations, interpolation operators

% upshape letters

% abbreviations

% \def\transp{{\rotatebox[origin=c]{180}{\footnotesize $\perp$}}}

% variables

% functions

% commands
\newcommand{\dv}[1]{\,{\mathrm d}#1}

\newcommand{\wcheck}[1]{#1\hspace{-.8ex}\mbox{\huge {\lower.45ex \hbox{$\textstyle \check{}$}}} \hspace{.5ex}}

% differential operators

\DeclareMathOperator{\diver}{div}
% set operators

% matrix operators

% other operators

% marginal notes
\setlength{\marginparwidth}{0.8in}
\let\oldmarginpar\marginpar
\renewcommand\marginpar[1]{
  \oldmarginpar[\raggedleft\footnotesize #1]
  {\raggedright\footnotesize #1}}
% theorems
% \newtheorem{definition}{Definition}
% \newtheorem{lemma}[definition]{Lemma}
% \newtheorem{proposition}[definition]{Proposition}
% \newtheorem{theorem}[definition]{Theorem}
% \newtheorem{corollary}[definition]{Corollary}
% \newtheorem{remark}[definition]{Remark}
% \newtheorem{remarks}[definition]{Remarks}
% \newtheorem{example}[definition]{Example}
% \newtheorem{examples}[definition]{Examples}
% \newtheorem{algorithm}[definition]{Algorithm}
%\newtheorem{assumption}[definition]{Assumption}
%\numberwithin{definition}{section}

% no paragraph indentation
\parindent0mm
% mark changes

% additional defs

\providecommand{\abstmp}[2]{{#1\lvert{#2}#1\rvert}}
\providecommand{\bigabs}[1]{\abstmp{\big}{#1}}
\providecommand{\Bigabs}[1]{\abstmp{\Big}{#1}}
\providecommand{\biggabs}[1]{\abstmp{\bigg}{#1}}

\numberwithin{equation}{section}
\newcommand{\eps}{\varepsilon}					   % Epsilon
\newcommand{\abs}[1]{\lvert#1\rvert} 		   % Absolute value notation
\newcommand\norm[1]{\left\lVert#1\right\rVert} % Norm notation
	
\newcommand{\fdg}{{\,\big|\,}}

\newcommand{\ue}{\prescript{\eps\hspace{-.25mm}}{}u}
\newcommand{\bue}{\prescript{\eps\hspace{-.25mm}}{}{\bar u}}
\newcommand{\hue}{\prescript{\eps\hspace{-.25mm}}{}{\hat u}}
\newcommand{\Ee}{\prescript{\eps\hspace{-.45mm}}{}E}
\newcommand{\Fe}{\prescript{\eps\hspace{-.45mm}}{}F}
\newcommand{\divo}{\operatorname{div}}
\newcommand{\h}{d}
\newcommand{\gH}{D}

\newtheorem{ass}[equation]{Assumption}
\newtheorem{lemma}[equation]{Lemma}
\newtheorem{proposition}[equation]{Proposition}
\newtheorem{theorem}[equation]{Theorem}

\newtheorem{remark}[equation]{Remark}

\begin{document}
\title[Discrete approximations of nonlinear
  parabolic equations]{Convergence of fully discrete implicit
 and semi-implicit approximations of nonlinear
  parabolic equations}%semi-implicit time stepping}
\author{S\"oren~Bartels}
\address{Abteilung f\"ur Angewandte Mathematik,  
Albert-Ludwigs-Universit\"at Freiburg, Hermann-Herder-Str. 10, 
79104 Freiburg i.Br., Germany}
\email{bartels@mathematik.uni-freiburg.de}
\author{Michael R{\r u}{\v z}i{\v c}ka}
\address{Abteilung f\"ur Angewandte Mathematik,  
Albert-Ludwigs-Universit\"at Freiburg, Ernst-Zermelo-Str. 1, 
79104 Freiburg i.Br., Germany}
\email{rose@mathematik.uni-freiburg.de}

\date{\today}
\subjclass{65M60, 35K92, 65M12}
\begin{abstract}
The article addresses the convergence of implicit and semi-implicit, fully
discrete approximations of a class of nonlinear parabolic 
evolution problems. Such schemes are popular in
the numerical solution of evolutions defined with the $p$-Laplace 
operator since the latter lead to linear systems of equations
in the time steps. The semi-implicit treatment of the operator
requires introducing a regularization parameter that has to 
be suitably related to other discretization parameters. To
avoid restrictive, unpractical conditions, a careful convergence
analysis has to be carried out. The arguments presented in this
article show that convergence holds under a moderate condition
that relates the step size to the regularization parameter but
which is independent of the spatial resolution. 
\end{abstract}
\keywords{Nonlinear evolutions, time-stepping schemes, finite element methods, convergence}
\maketitle

\section{Introduction}
It has recently been shown in the article~\cite{BDN} that the 
semi-implicit time stepping scheme for the $p$-Laplace gradient flow defined with 
an initial function~$u^0$ via the recursion
\begin{align}\label{eq:p-lap}
d_\tau u^k = \diver \frac{\nabla u^k}{|\nabla u^{k-1}|_\eps^{2-p}} 
\end{align}
with the regularized norm $|a|_\eps = (|a|^2+\eps^2)^{1/2}$ and the 
backward difference quotient operator $d_\tau = (u^k-u^{k-1})/\tau$ 
is unconditionally energy stable. Specifically, this means that the estimate 
\begin{equation*}%\label{eq:ener_bound}
E_{p,\eps}[u^L] + \tau \sum_{k=1}^L \|d_\tau u^k\|_{L^2(\O)}^2 
+ \frac{\tau^2}{2} \sum_{k=1}^L 
\int_\O \frac{|\nabla d_\tau u^k|^2}{|\nabla u^{k-1}|_\eps^{2-p}} \dv{x}
\le E_{p,\eps}[u^0]
\end{equation*}
holds for all $\tau,\eps>0$ and $1\le p\le 2$ 
and all $L\ge 1$ with the regularized $p$-Dirichlet energy
\[
E_{p,\eps}[u] = \frac1p \int_\O |\nabla u|_\eps^p \dv{x}.
\]
The energy estimate follows from testing~\eqref{eq:p-lap} with
$d_\tau u^k$ using special identities from finite difference calculus
and certain monotonicity properties of the $p$-Laplace operator.  An
error analysis for a generic spatial discretization with
mesh-size~$h>0$ of the scheme leads to an upper bound for the
approximation error in $L^\infty(0,T;L^2(\Omega))$ involving the term
\[
\tau^{1/2} (h\eps)^{(p-2)/2}.
\]
To deduce a convergence rate for the error the restrictive condition
$\tau = o((h\eps)^{2-p})$ has to be satisfied. The aim of this note
is to show that the sequence of piecewise constant interpolants 
of the iterates $(u^k_h)_{k=0,\dots,K}$, ${h>0}$,  (weakly) converges to the 
solution of the continuous flow under the less restrictive condition
$\tau = O(\eps^{2-p})$ independently of the mesh-size $h>0$ and
even for a larger class of operators also
including lower order contributions. 

To explain our ideas we interpret the iterates $(u^k)_{k=0,\dots,K}$ of the
semi-implicit scheme as iterates of an implicit, unregularized scheme 
with discrepancy terms on the right-hand sides, i.e., with
the $L^2$ inner product $(\cdot,\cdot)$ we have
\begin{equation}\label{eq:impl_repr}
(d_\tau u^k,v) + \big(|\nabla u^k|^{p-2} \nabla u^k, \nabla v\big)
= (\cD^k, \nabla v).
\end{equation}
Using the operator 
\[
S_\eps(a) = \frac{a}{|a|_\eps^{2-p}} 
\]
we rewrite the discrepancy terms as 
\[\begin{split}
\cD^k &= \big[|\nabla u^k|^{p-2} - |\nabla u^{k-1}|_\eps^{p-2}\big] \nabla u^k \\
&= \big(S_0(\nabla u^k) - S_\eps(\nabla u^k)\big)
 + \big(S_\eps(\nabla u^k) - |\nabla u^{k-1}|^{p-2}_\eps \nabla u^k\big) = E^k + F^k.
\end{split}\]
The first term on the right-hand side is controlled using the uniform
convergence property 
\[
|S_\eps(a)-S_0(a)|\le (2-p) \eps^{p-1}
\]
which follows from the mean value estimate 
$\big| |a|^{2-p} - |a|_\eps^{2-p}\big| \le (2-p)|a|^{1-p} \eps$ for $a\neq 0$.
% and the resulting estimate 
% \[
% \big|S_\eps(a)-S_0(a)\big| 
% \le \frac{|a|\big||a|^{2-p}-|a|_\eps^{2-p}\big|}{|a|_\eps^{2-p} |a|^{2-p}} 
% \le (2-p) \eps \frac{|a||a|^{1-p}}{|a|_\eps^{2-p}|a|^{2-p}}
% \le (2-p) \eps^{p-1}.
% \]
Therefore, we have
\[
(E^k, \nabla v) \le (2-p)\eps^{p-1} \|\nabla v\|_{L^1(\O)}.
\]
To bound the second term on the right-hand side we use the estimate 
\[
|S_\eps(a)-S_\eps(b)|\le c_p |a-b| \big(\eps^2 + |a|^2 + |b|^2\big)^{(p-2)/2},
\]
cf.~\cite{DiEbRu07}, which leads to 
\[\begin{split}
(F^k,\nabla v)
&= \int_\O  \big(S_\eps(\nabla u^k) - S_\eps(\nabla u^{k-1}) + |\nabla u^{k-1}|^{p-2}_\eps \nabla [u^{k-1}-u^k]\big) \cdot \nabla v \dv{x}  \\
% &\le (c_p+1)\tau \int_\O \frac{|\nabla d_\tau u^k|}{|\nabla u^{k-1}|_\eps^{2-p}} |\nabla v|\dv{x} \\
&\le \frac{(c_p+1)^2 \tau\a_\eps}{2} \int_\O \frac{|\nabla d_\tau u^k|^2}{|\nabla u^{k-1}|_\eps^{2-p}} \dv{x}
+ \frac{\tau \eps^{p-2}}{2 \a_\eps} \int_\O |\nabla v|^2 \dv{x}. 
\end{split}\]
Letting $\overline{\cD}$ be the piecewise constant interpolation of $\cD^k$ and
integrating the estimate for $\cD^k$ over $(0,T)$ we thus obtain with the energy bound that 
\[\begin{split}
\int_0^T (\overline{\cD},\nabla v) \dv{t} & \le 
(2-p) \eps^{p-1} \int_0^T \|\nabla v\|_{L^1(\O)}\dv{t}  \\
& + c_p^2 \a_\eps \tau^2 \sum_{k=1}^K \int_\O \frac{|\nabla d_\tau u^k|^2}{|\nabla u^{k-1}|_\eps^{2-p}} \dv{x}
+   \frac{\tau \eps^{p-2}}{2 \a_\eps} \int_0^T \|\nabla v\|_{L^2(\O)}^2  \dv{t},
\end{split}\]
where $\a_\eps>0$ is arbitrary. 
Choosing, e.g., $\a_\eps = (\tau\eps^{p-2})^{1/2}$, and requiring $\tau = o(\eps^{2-p})$ 
we find that the discrepancy term converges to zero whenever $v\in L^2(0,T;W^{1,2}_0(\O))$. 
If an implicit discretization of the $p$-Laplace gradient flow is known to converge
to the exact solution then it follows that the iterates of the semi-implicit 
scheme~\eqref{eq:p-lap} also converge to this object. 

Surprisingly, a rigorous convergence analysis for the fully discrete,
implicit %time-stepping
scheme for the $p$-Laplace evolution does not
seem to be available in the literature. Classical references such
as~\cite{GGZ}, \cite{zei-IIB}, \cite{showalter}, and~\cite{Roub} consider 
semi-discrete schemes,
i.e., either Galerkin methods corresponding to a spatial
discretization or Rothe methods realizing implicit time stepping
schemes. Full discretizations lead to additional analytical
difficulties as, e.g., the schemes only provide limited control on the
time derivatives. To avoid the construction of a stable projection
operator a generalized Aubin--Lions lemma has been established
in~\cite{Roub}. An alternative to this is based on the
Hirano--Landes lemma, which ensures the convergence in the nonlinear
operator provided an energy estimate can be established and a
generalized condition (M) can be verified based on the approximate
equations and the properties of the nonlinear operator (cf.~\cite
{br-hirano} for previous versions of this approach). Another approach
to establishing convergence of solutions can be based on the framework
of subdifferential flows but this limits the analysis to convex
energies and excludes other nonlinearities. In order to verify the
generalized condition (M) we require in addition to the energy
estimate stated above also bounds resulting from
testing the scheme~\eqref{eq:p-lap} by~$u^k$.

Various error estimates are available for numerical approximations of $p$-Laplace
evolutions and related equations, see, e.g.,~\cite{BarLiu94,rulla,NoSaVe00,FeOePr05,DiEbRu07}. 
These are typically valid under certain regularity conditions,
impose relations between discretization parameters, or consider only 
implicit time-stepping schemes. Here, we are interested
in establishing convergence of the approximations obtained with the practical
semi-implicit scheme~\eqref{eq:p-lap} under moderate conditions on the relation between the
step-size parameter~$\tau$ and the regularization parameter~$\eps$. Therefore,
we cannot resort to those results when we affiliate the convergence to the
convergence of an implicit scheme with discrepancy terms. 

To establish the convergence of the iterates $(u^k)_{k=0,\dots,K}$ of
the semi-implicit scheme~\eqref{eq:p-lap}, even when a spatial
discretization is carried out, we first consider the corresponding
implicit scheme and prove that appropriate interpolants weakly
accumulate at an exact solution. This result is the consequence of a
general convergence result for a fully discrete implicit
approximation proved in an abstract framework for evolution equations
with pseudo-monotone operators. Typical examples of such operators are
sums of a monotone and a compact operator.
% and also considers a nonlinear low order contribution.
Only moderate assumptions will be made on the data and on the
discretizations. A technical condition on the finite element spaces
requires sequences of finite element spaces to be nested as the
mesh-size tends to zero. 

The outline of this article is as follows. In Subsection~\ref{sec:prelim}
we define a class of energy densities that lead to admissible
operators to which our arguments apply. In
Section~\ref{sec:fully_impl} we derive a convergence result for
approximations obtained with a fully discrete implicit scheme for
general evolution equations with pseudo-monotone operators. This
serves as a guideline to show that the approximations obtained with a
semi-implicit, practical scheme generalizing~\eqref{eq:p-lap}
for a large class of monotone evolutions including lower order
contributions converges to a solution. 

Throughout this article we let $\O\subset \R^d$, $d\ge 2$, be a bounded Lipschitz
domain and use standard notation for Lebesgue and Sobolev spaces. Most
results apply to bounded open sets $\O$ but in view of numerical 
discretizations we consider the slightly stronger condition. We denote the
inner product in $L^2(\O)$ by $(\cdot,\cdot)$ and the duality pairing
of a Banach space $V$ with its dual $V'$ which often extends the $L^2$ 
inner product by $\langle \cdot, \cdot \rangle_V$.

% It has recently been shown in \cite{BDN} that the semi-implicit time 
% stepping scheme for the $p$-Laplace gradient flow 
% \begin{align}\label{eq:p-lap}
%   d_\tau  u^k = \diver \frac{\nabla u^k}{|\nabla u^{k-1}|_\veps^{2-p}}\,, 
% \end{align}
% where ${d_\tau c^k:=\tau^{-1}(c^k-c^{k-1})}$ denotes the backward difference
% quotient operator with time step-size $\tau>0$ and $|a|_\veps =
% (|a|^2+\veps^2)^{1/2}$ is a regularization of the euclidean length,  
% is unconditionally energy stable in the sense that 
% \begin{equation}\label{eq:ener_bound}
%   E_{p,\veps}[u^L] + \tau \sum_{k=1}^L \|d_\tau  u^k\|^2 
%   + \frac{\tau^2}{2} \sum_{k=1}^L 
%   \int_\O \frac{|\nabla d_\tau  u^k|^2}{|\nabla u^{k-1}|_\veps^{2-p}} \dv{x}
%   \le E_{p,\veps}[u^0]
% \end{equation}
% for all $1\le L \le K$. An error analysis for a generic
% spatial discretization of the scheme led to an upper
% bound for the approximation error in $L^\infty(L^2)$ 
% involving the term
% \[
% \tau^{1/2} (h\veps)^{(p-2)/2}
% \]
% which implies that the restrictive condition 
% $\tau = o((h\veps)^{2-p})$ has to be satisfied in order to 
% guarantee convergence of the error. The aim of this note
% is to show that the sequence of piecewise affine interpolants 
% of the iterates $(u^k)$ (weakly) converges to the solution of the
% continuous flow under less restrictive conditions on the
% discretization parameters. 

\subsection{Properties of the nonlinear operator}\label{sec:prelim}
For a given convex function $\vphi: \R_{\ge 0}\to \R_{\ge 0}$ we
consider energy functionals $E_\vphi: L^1(\O) \to \R\cup\{+\infty\}$
defined via
\[
E_\vphi[u] = \int_\O \vphi(|\nabla u|) \dv{x}.
\]
We denote by $W^{1,\vphi}(\O)$ the set of weakly differentiable 
functions $u\in L^1(\O)$ for which we have $E_\vphi[u]<\infty$. 
We make the following assumptions on the energy density $\vphi$
which define a class of sub-quadratic Orlicz functions.
\begin{ass}[Energy density]\label{ass:energy}
Let $\vphi\colon \R_{\ge 0} \to \R_{\ge 0}$ belong to $C^0(\R_{\ge 0}) \cap
C^1(\R_{>0})$. We assume that 
\begin{itemize}
\item[\textnormal{(C1)}] $r\mapsto \vphi(r)$ is convex with $\vphi(0)=0$.
\item[\textnormal{(C2)}] $r\mapsto \vphi'(r)/r$ is positive and
  nonincreasing. 
\end{itemize}
\end{ass}
Sometimes we additionally make the following assumption. 
\begin{ass}[N-function]\label{ass:N}
Let $\vphi\colon \R_{\ge 0} \to \R_{\ge 0}$ belong to $C^1(\R_{\ge 0}) \cap
C^2(\R_{>0})$. We assume that 
\begin{itemize}
\item[\textnormal{(C3)}] The function $\vphi $ is 
convex and positive on $(0,\infty)$,
satisfies $\vphi(0)=0$, and $\lim_{s\to 0} \vphi(s)/s = 0$ and 
$\lim_{s\to \infty} \vphi(s)/s=\infty$; moreover $\vphi$ and its convex
conjugate $\vphi^*$ satisfy $\vphi(2s)\lesssim \vphi(s)$ and
$\vphi^*(2r)\lesssim \vphi(r)$ 
for all $r,s \in \R_{\ge 0}$. Finally we assume that there exist 
constants $\kappa_0 \in (0,1]$, $\kappa{_1} >0$ such that for all $r
\in \R_{>0}$
\begin{align*}
  \kappa_0 \vphi'(r) \le r\vphi''(r) \le \kappa_1\vphi '(r)\,.
\end{align*}
\end{itemize}
\end{ass}

%From now on we assume that $\vphi$ always satisfies conditions (C1)--(C3).
% Condition~(C2) implies that the following semi-implicit time-stepping scheme is
% well posed. 
For a given N-function $\vphi$ we define the {\rm shifted N-functions}
$\{\vphi_\alpha \}_{\alpha  \ge 0}$, cf.~\cite{die-ett,die-kreu,dr-nafsa}, for
$t\geq0$ by
\begin{align}
  \label{eq:phi_shifted}
  \vphi_\alpha (t):= \int _0^t \vphi_\alpha '(s)\, ds\qquad\text{with }\quad
  \vphi'_\alpha (t):=\vphi'(\alpha +t)\frac {t}{\alpha +t}.
\end{align}
If $\vphi$ satisfies the conditions (C1), (C2) and (C3), than the
family of shifted N-functions $\{\vphi _\alpha\}_{\alpha\ge 0}$ also
satisfies conditions (C1), (C2) and (C3).  The family of shifted
N-functions $\{\vphi_\alpha\}_{\alpha\ge 0}$ induces operators
% $A\colon \R^d\to \R^d$ with potential $\vphi$ through
% \begin{align}
%   \label{eq:op}
%   A(a):= \frac {\vphi'(\abs{a})}{\abs{a}}a\,.
% \end{align}
% Similarly the energy $E_{\vphi_\alpha} $ induces an operator
$A_\alpha\colon \R^d\to \R^d$ with potential $\vphi_\alpha$ via
\begin{align}
  \label{eq:op-a}
  A_\alpha(a):= \frac {\vphi_\alpha'(\abs{a})}{\abs{a}}a\,.
\end{align}
One easily checks the following relations (cf.~\cite{die-ett,die-kreu,dr-nafsa}).
\begin{lemma}\label{la:mon_props_b}
If $\vphi$ satisfies~(C3), then the following statements are valid: \\
% (i) For all $a,b\in \R^d$ we have 
% \begin{align}
% \big(A(a)-A(b)\big)\cdot (a-b) & \eqsim  \vphi_{|a|} (|a-b|), \label{eq:op_A_mon} \\
% \big|A(a)-A(b)\big| &\eqsim \vphi_{|a|}'(|a-b|), \label{eq:op_A_cont}
% \end{align}
% and
% \begin{equation}\label{eq:sim_phi}
% \vphi_{|a|} (|a-b|) \eqsim \frac{\vphi'(|a|+|b|)}{|a|+|b|} |a-b|^2.
% \end{equation}
(i) For all $a,b\in \R^d$ and all $\alpha\ge 0$ we have with
constants independent of $\alpha$  
\begin{align}
  \big(A_\alpha(a)-A_\alpha(b)\big)\cdot (a-b) & \eqsim  (\vphi_\alpha)_{|a|} (|a-b|), \label{eq:op_A_mon-a} \\
  \big|A_\alpha(a)-A_\alpha(b)\big| &\eqsim (\vphi_\alpha)_{|a|}'(|a-b|), \label{eq:op_A_cont-a}
\end{align}
and
\begin{equation}\label{eq:sim_phi-a}
  (\vphi_\alpha)_{|a|} (|a-b|) \eqsim \frac{\vphi_\alpha'(|a|+|b|)}{|a|+|b|} |a-b|^2.
\end{equation}
(ii) For all $\d>0$ there exists $c_\d$ such that for all
$\a,r,s \ge 0$ we have
\begin{equation}\label{eq:young_phi}
  \vphi_\a'(r) s \le c_\d \vphi_\a(r) + \d \vphi_\a(s).
\end{equation}
(iii) For all $\d$ there exists $c_\d$ such that for all $a, b \in
\R^d$, and all $r\ge 0$
\begin{align}
  \label{eq:shift}
  \begin{split}
    \vphi_{|a|} (r) &\le c_\d\,\vphi_{\abs{b}}(r) + \d\,
    \vphi_{\abs{a}} (|a-b|),
    \\
    (\vphi_{|a|})^* (r) &\le c_\d\,(\vphi_{\abs{b}})^*(r) + \d\,
    \vphi_{\abs{a}} (|a-b|).
  \end{split}
\end{align}
Moreover, we have $\vphi_{\abs{a}} (|a-b|)\eqsim \vphi_{\abs{b}} (|a-b|)$
\end{lemma}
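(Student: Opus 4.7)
The plan is to verify each assertion separately after a common preparatory step. As a preliminary, I would first check that the shifted family $\{\vphi_\alpha\}_{\alpha \ge 0}$ inherits the conditions (C1)--(C3), so that the constants in all subsequent inequalities can be chosen uniformly in $\alpha$. Direct differentiation of the defining formula $\vphi_\alpha'(t) = \vphi'(\alpha+t) t/(\alpha+t)$ followed by elementary manipulation transfers the two-sided bound $\kappa_0 \vphi'(r) \le r \vphi''(r) \le \kappa_1 \vphi'(r)$ to its shifted analogue. In particular, $\vphi_\alpha'(r)/r$ is slowly varying on dyadic scales, which is the property most often invoked below.

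For part~(i), the key observation is that $A_\alpha$ is the gradient of the radially symmetric convex potential $\Phi_\alpha(a) := \vphi_\alpha(|a|)$. I would compute the Hessian $D^2 \Phi_\alpha(c)$ and note that its eigenvalues are $\vphi_\alpha'(|c|)/|c|$, with multiplicity $d-1$ perpendicular to $c$, and $\vphi_\alpha''(|c|)$ in the radial direction. By the preparatory step these are comparable, and integration along the segment from $b$ to $a$ gives
\[
\big(A_\alpha(a) - A_\alpha(b)\big) \cdot (a-b) \eqsim |a-b|^2 \int_0^1 \frac{\vphi_\alpha'(|b + t(a-b)|)}{|b+t(a-b)|} \dv{t}.
\]
A standard line-integral estimate exploiting the aforementioned slow variation identifies the right-hand side with $\vphi_\alpha'(|a|+|b|)/(|a|+|b|) \cdot |a-b|^2$ up to multiplicative constants, yielding \eqref{eq:sim_phi-a}. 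The equivalence \eqref{eq:op_A_mon-a} then follows from the analogous representation of $(\vphi_\alpha)_{|a|}(|a-b|)$ produced by applying the same estimate to the definition \eqref{eq:phi_shifted}, combined with $|a| + |a-b| \eqsim |a|+|b|$. Relation \eqref{eq:op_A_cont-a} is proven similarly by bounding $|A_\alpha(a)-A_\alpha(b)|$ from above via the operator norm of the Jacobian and from below by exploiting the extremal eigenvalue on a subsegment bounded away from the degenerate endpoint.

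Part~(ii) reduces to the classical Fenchel--Young inequality $\vphi_\alpha'(r) s \le \vphi_\alpha^*(\vphi_\alpha'(r)) + \vphi_\alpha(s)$ combined with the $\Delta_2$-identity $\vphi_\alpha^*(\vphi_\alpha'(r)) \eqsim \vphi_\alpha(r)$; the asymmetric weights $c_\delta$ and $\delta$ are then produced by rescaling $s \mapsto \lambda(\delta) s$ and using the $\Delta_2$ property once more. I expect part~(iii) to be the main obstacle, as the shift-change inequality must reconcile $\vphi_{|a|}$ and $\vphi_{|b|}$ when $|a|$ and $|b|$ are far apart. The plan is to compare $\vphi'(|a|+r)$ with $\vphi'(|b|+r)$ by integration along a path from $|b|$ to $|a|$, using the upper bound $s\vphi''(s) \le \kappa_1 \vphi'(s)$ from (C3) to convert the difference into a multiplicative factor involving $|a-b|/(|a|+r)$. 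A subsequent Young-type splitting then redistributes the surplus into $c_\delta \vphi_{|b|}(r)$ plus a small multiple of $\vphi_{|a|}(|a-b|)$. The dual inequality follows by the same reasoning applied to $\vphi^*$, which is itself an N-function of the required type, and the concluding comparability $\vphi_{|a|}(|a-b|) \eqsim \vphi_{|b|}(|a-b|)$ is obtained by inserting $r = |a-b|$ in the shift inequality and choosing $\delta$ small enough to absorb one side into the other.
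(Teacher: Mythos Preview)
The paper does not actually prove this lemma: it is stated with the preamble ``One easily checks the following relations (cf.~\cite{die-ett,die-kreu,dr-nafsa})'' and no proof block is given. Your sketch is essentially the standard route taken in the cited literature (Diening--Ettwein, Diening--Kreuzer), so there is nothing to compare against and your proposal is an acceptable expansion of the omitted argument.
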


We need some further properties related to the function $\vphi$. In
the same way as in \cite{BDN} one can prove the following inequality. 
\begin{lemma}\label{la:orlicz_stab}
Under condition $(C2)$ we have for all $a,b\in \R^d$ and all $\eps \ge
0$ that
\[
  \frac{\vphi_\eps'(|a|)}{|a|} b \cdot (b-a) \ge \vphi_\eps(|b|)- \vphi_\eps(|a|) 
  + \frac12 \frac{\vphi_\eps'(|a|)}{|a|} |b-a|^2. 
\]
\end{lemma}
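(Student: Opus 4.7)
The plan is to reduce the vector inequality to a scalar inequality about $\vphi_\eps$ restricted to nonnegative reals, then exploit the monotonicity of $r\mapsto \vphi_\eps'(r)/r$ that is inherited from (C2).

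First I would observe that the shifted function $\vphi_\eps$ itself satisfies condition (C2). Indeed, from the definition $\vphi_\eps'(t)=\vphi'(\eps+t)\,t/(\eps+t)$ we compute $\vphi_\eps'(t)/t = \vphi'(\eps+t)/(\eps+t)$, and since $r\mapsto \vphi'(r)/r$ is positive and nonincreasing by (C2), the same holds for $t\mapsto \vphi_\eps'(t)/t$. This is the only analytic input we need beyond elementary algebra.

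Next, setting $A=|a|$, $B=|b|$ and $g(r):=\vphi_\eps'(r)/r$, I would expand $|b-a|^2=|b|^2-2b\cdot a+|a|^2$ and observe that
\[
g(A)\,b\cdot(b-a) - \tfrac12 g(A)\,|b-a|^2 = \tfrac12 g(A)\bigl(|b|^2-|a|^2\bigr) = \tfrac12 g(A)(B^2-A^2).
\]
Therefore, the claimed inequality is equivalent to the scalar statement
\[
\vphi_\eps(B)-\vphi_\eps(A) \le \tfrac12 g(A)(B^2-A^2)\qquad\text{for all } A,B\ge 0,
\]
which no longer involves the orientation of $a$ and $b$.

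Finally I would prove this scalar inequality by introducing the auxiliary function $h(r):=\vphi_\eps(r)-\tfrac12 g(A)\,r^2$. Its derivative is $h'(r)=\vphi_\eps'(r)-g(A)\,r=r\,(g(r)-g(A))$, which is $\ge 0$ for $r\le A$ and $\le 0$ for $r\ge A$ by the monotonicity of $g$ established above. Hence $h$ attains its maximum at $r=A$, so $h(B)\le h(A)$, which is exactly the desired scalar inequality. The only mild subtlety to check is the case $a=0$ (or $A=0$): one can either interpret $\vphi_\eps'(r)/r$ at $r=0$ as the limit $\vphi_\eps''(0)$ (which exists under (C2) as a well-defined nonnegative value, possibly $+\infty$), or first prove the inequality for $|a|>0$ and pass to the limit, using continuity of $\vphi_\eps$ and the fact that the leading-order term $g(A)|b-a|^2$ controls the behavior as $A\to 0$. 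I expect this borderline case to be the only place requiring care; the main argument itself is a short one-dimensional monotonicity computation.
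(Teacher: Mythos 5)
Your proof is correct, and it is essentially the argument one would expect here (the paper itself gives no proof, deferring to \cite{BDN}). The two ingredients are exactly right: the algebraic identity $2b\cdot(b-a)=|b|^2-|a|^2+|b-a|^2$ reduces the claim to the scalar inequality $\vphi_\eps(B)-\vphi_\eps(A)\le\tfrac12\,g(A)(B^2-A^2)$ with $g(r)=\vphi_\eps'(r)/r$, and the observation that $\vphi_\eps'(t)/t=\vphi'(\eps+t)/(\eps+t)$ shows $g$ inherits the monotonicity of (C2), after which the one-variable maximization of $h(r)=\vphi_\eps(r)-\tfrac12 g(A)r^2$ closes the argument. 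Your handling of the borderline case $a=0$ is also adequate: for $\eps>0$ the value $g(0^+)=\vphi'(\eps)/\eps$ is finite, while for $\eps=0$ either $\lim_{r\to0^+}g(r)$ is finite (and the argument applies verbatim) or it is $+\infty$, in which case the inequality holds trivially unless $b=0$, where both sides vanish.
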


To handle the difference between the implicit scheme and the
semi-implicit scheme, the following estimate is useful.
\begin{lemma}\label{lem:est}
If $\vphi$ satisfies~(C2), (C3), then we have for all $a,b \in \R^d$,
$\eps \ge 0$
\begin{align*}
  \biggabs{ \Big (\frac{\vphi'_\eps (\abs{a})}{\abs{a}}
  -\frac{\vphi'_\eps (\abs{b})}{\abs{b}} \Big) a} \lesssim 
  \frac{\vphi'_\eps (\abs{b})}{\abs{b}} \abs{a-b}\,.  
\end{align*}
\end{lemma}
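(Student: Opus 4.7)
My plan is to rewrite the left-hand side as the norm of a difference of operator values and reduce everything to the continuity estimate already available for $A_\eps$. Set $\psi_\eps(r) := \vphi'_\eps(r)/r$, so that the operator $A_\eps$ from \eqref{eq:op-a} satisfies $A_\eps(a) = \psi_\eps(\abs{a})\, a$. Observe that the quantity to be bounded equals $\abs{\psi_\eps(\abs{a})\, a - \psi_\eps(\abs{b})\, a}$. Adding and subtracting $A_\eps(b) = \psi_\eps(\abs{b})\, b$ and applying the triangle inequality gives
\begin{align*}
\biggabs{\Big(\frac{\vphi'_\eps(\abs{a})}{\abs{a}} - \frac{\vphi'_\eps(\abs{b})}{\abs{b}}\Big) a}
\le \abs{A_\eps(a) - A_\eps(b)} + \frac{\vphi'_\eps(\abs{b})}{\abs{b}}\,\abs{a-b}.
\end{align*}
The second summand is already of the desired form, so the task reduces to controlling $\abs{A_\eps(a) - A_\eps(b)}$.

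For this, I would invoke the continuity estimate \eqref{eq:op_A_cont-a} at shift $\alpha=\eps$, which uses (C3), and then unfold the definition \eqref{eq:phi_shifted} of the shifted N-function at the shift $\abs{a}$:
\begin{align*}
\abs{A_\eps(a) - A_\eps(b)}
\lesssim (\vphi_\eps)'_{\abs{a}}\bigl(\abs{a-b}\bigr)
= \vphi'_\eps\bigl(\abs{a}+\abs{a-b}\bigr)\, \frac{\abs{a-b}}{\abs{a}+\abs{a-b}}
= \psi_\eps\bigl(\abs{a}+\abs{a-b}\bigr)\,\abs{a-b}.
\end{align*}
The goal is then to replace the argument $\abs{a}+\abs{a-b}$ by $\abs{b}$ on the right-hand side, which is the point at which the asymmetry of the claimed inequality enters.

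The final step uses only~(C2). Unfolding \eqref{eq:phi_shifted} once more shows $\psi_\eps(r) = \vphi'(\eps+r)/(\eps+r)$, which is nonincreasing in $r$ since $s\mapsto \vphi'(s)/s$ is nonincreasing by~(C2) and $r\mapsto \eps+r$ is increasing. Combined with the elementary bound $\abs{b} \le \abs{a} + \abs{a-b}$, this yields $\psi_\eps(\abs{a}+\abs{a-b}) \le \psi_\eps(\abs{b}) = \vphi'_\eps(\abs{b})/\abs{b}$, and the lemma follows. I do not foresee a genuine obstacle; the only delicate point is the choice to add and subtract $A_\eps(b)$ rather than $A_\eps(a)$ at the outset, because the triangle inequality $\abs{b} \le \abs{a}+\abs{a-b}$ is precisely the direction in which~(C2) can be used. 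The reverse decomposition would force one to bound $\psi_\eps(\abs{b}+\abs{a-b})$ from above by $\psi_\eps(\abs{a})$, which is not available and would not yield the asymmetric estimate claimed in the lemma.
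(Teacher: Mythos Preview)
Your proof is correct and follows essentially the same route as the paper: add and subtract $A_\eps(b)$, apply the continuity estimate \eqref{eq:op_A_cont-a}, unfold the shifted derivative, and use the monotonicity from~(C2). Your final step is in fact slightly more direct than the paper's: the paper passes from the argument $\abs{a}+\abs{a-b}$ to $\abs{b}+\abs{b-a}$ via the equivalence $\abs{a}+\abs{b}\eqsim\abs{b}+\abs{b-a}$ before invoking~(C2), whereas you apply the triangle inequality $\abs{b}\le\abs{a}+\abs{a-b}$ immediately, which suffices.
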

\begin{proof}
  We have
  \begin{align*}
    \biggabs{ \Big (\frac{\vphi'_\eps (\abs{a})}{\abs{a}}
    -\frac{\vphi'_\eps (\abs{b})}{\abs{b}} \Big) a} 
    &=  \Bigabs{ A_\eps (a) - A_\eps (b) + \frac{\vphi'_\eps
      (\abs{b})}{\abs{b}} (b- a)} 
    \\
    &\lesssim (\vphi_\eps')_{\abs{a}} (\abs{a-b})  +\frac{\vphi'_\eps
      (\abs{b})}{\abs{b}} \abs{b- a}
    \\
    &\lesssim \frac{\vphi_\eps' (\abs{b} +\abs{b-a})}{\abs{b}
      +\abs{b-a}}\abs{b-a}  +\frac{\vphi'_\eps
      (\abs{b})}{\abs{b}} \abs{b- a}
    \\
    &\le 2\, \frac{\vphi'_\eps
      (\abs{b})}{\abs{b}} \abs{b- a}\,,
  \end{align*}
  where we used that $\abs{b} +\abs{b-a} \eqsim \abs{b} +\abs{a}$ and
  condition (C2).
\end{proof}
We have a uniform convergence property for the operators $A_\eps$.
\begin{lemma}\label{lem:eps}
If $\vphi$ satisfies~(C2), (C3), then we have for all $a\in \R^d$,
$\eps \ge 0$
\begin{align*}
  \bigabs{ A_\eps (a) -A(a)} \le (1-\kappa_0)\vphi'(\eps)\,.
\end{align*}  
\end{lemma}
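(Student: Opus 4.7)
The plan is to reduce the bound to a scalar inequality and then derive it from (C2) and (C3). Since
\[
A_\eps(a) - A(a) = \frac{a}{|a|}\bigl(\vphi'_\eps(|a|) - \vphi'(|a|)\bigr)
\]
for $a \neq 0$, it suffices to prove the one-dimensional inequality $|\vphi'_\eps(t) - \vphi'(t)| \le (1-\kappa_0)\vphi'(\eps)$ for all $t,\eps \ge 0$. The cases $a=0$, $t=0$, and $\eps=0$ are trivial (note $\vphi_0 \equiv \vphi$), so I restrict attention to $t,\eps > 0$.

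First I would drop the absolute value. By (C2), $\vphi'(\eps+t)/(\eps+t) \le \vphi'(t)/t$, and multiplying by $t$ gives $\vphi'_\eps(t) = t\,\vphi'(\eps+t)/(\eps+t) \le \vphi'(t)$. Hence only $\vphi'(t) - \vphi'_\eps(t) \ge 0$ has to be estimated from above. The core step is the algebraic decomposition
\[
\vphi'(t) - \vphi'_\eps(t) = \frac{\eps\,\vphi'(\eps+t)}{\eps+t} - \bigl[\vphi'(\eps+t) - \vphi'(t)\bigr],
\]
which follows at once from writing $\vphi'_\eps(t) = \vphi'(\eps+t) - \frac{\eps}{\eps+t}\vphi'(\eps+t)$.

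To estimate the bracketed increment from below, I would combine the fundamental theorem of calculus with the lower bound $\kappa_0\vphi'(u) \le u\vphi''(u)$ from (C3), and then use (C2) a second time inside the integral: since $\vphi'(u)/u \ge \vphi'(\eps+t)/(\eps+t)$ for $u \in [t,\eps+t]$,
\[
\vphi'(\eps+t) - \vphi'(t) = \int_t^{\eps+t}\vphi''(u)\,\dd u \ge \kappa_0\int_t^{\eps+t}\frac{\vphi'(u)}{u}\,\dd u \ge \kappa_0\,\eps\,\frac{\vphi'(\eps+t)}{\eps+t}.
\]
Substituting back yields $\vphi'(t) - \vphi'_\eps(t) \le (1-\kappa_0)\,\eps\,\vphi'(\eps+t)/(\eps+t)$, and a final invocation of (C2) in the form $\vphi'(\eps+t)/(\eps+t) \le \vphi'(\eps)/\eps$ turns the right-hand side into $(1-\kappa_0)\vphi'(\eps)$, completing the proof.

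The only step that needs some thought is the decomposition itself: it is what isolates the defect $\eps/(\eps+t)$ from the monotone increment of $\vphi'$ and is responsible for producing the clean factor $(1-\kappa_0)$ once (C3) is invoked. Everything else is a routine application of the monotonicity asserted in (C2), so I do not anticipate any serious obstacle.
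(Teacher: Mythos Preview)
Your proof is correct, and it takes a genuinely different route from the paper's. The paper introduces the auxiliary function $f(t)=t/\vphi'(t)$, observes from (C2) that $f$ is nondecreasing and from (C3) that $0\le f'(s)\le(1-\kappa_0)/\vphi'(s)$, and then applies the mean value theorem to $f(t+\eps)-f(t)$; the identity $\frac{1}{f(t)}-\frac{1}{f(t+\eps)}=\frac{f(t+\eps)-f(t)}{f(t)f(t+\eps)}$ converts this back into an estimate for $\frac{\vphi'(t)}{t}-\frac{\vphi'(\eps+t)}{\eps+t}$. Your argument avoids the auxiliary function entirely: you decompose $\vphi'(t)-\vphi'_\eps(t)$ algebraically and use the fundamental theorem of calculus together with the pointwise bound $\vphi''(u)\ge\kappa_0\vphi'(u)/u$ to bound the increment $\vphi'(\eps+t)-\vphi'(t)$ from below. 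Both routes arrive at the same intermediate inequality $(1-\kappa_0)\eps\,\vphi'(\eps+t)/(\eps+t)$ and finish with (C2) in the form $\vphi'(\eps+t)/(\eps+t)\le\vphi'(\eps)/\eps$. Your approach is arguably more direct, while the paper's has the virtue of encoding (C2) and (C3) as monotonicity and a derivative bound for a single function.
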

\begin{proof}
  For $a=0$ or $\eps =0$ the estimate is clear. Thus, we assume in the following
  $\abs{a}>0$ and $ \eps>0$. Setting $f(t):=\frac {t}{\vphi'(t)}$, $t>0$, we see
  from (C2) that $f$ is nondecreasing.  Moreover, from (C3) we obtain
  that  $0\le f'(s)= \frac 1{\vphi'(s)}\Big (1- \frac {s \vphi''(s)}{\vphi
    '(s)}\Big )\le \frac {1-\kappa_0}{\vphi'(s)}$. From the mean value
  theorem we get for all $t>0$, $\eps > 0$
  \begin{align*}
    \bigabs{f(t+\eps )-f(t)} 
    &= \eps\,  {f'(\zeta)} \le \eps\, \frac
      {1-\kappa_0}{\vphi'(\zeta)} \le \eps\,
      \frac  {1-\kappa_0}{\vphi'(t)} ,
  \end{align*}
  where we used that $\zeta \in (t,t+\eps)$ and that $\vphi '$ is
  increasing. Thus we get
\begin{align*}
  \bigabs{ A_\eps (a) -A(a)} 
  &= \biggabs{\frac{\vphi'(\eps +\abs{a})}{\eps +\abs{a}} -
    \frac{\vphi'(\abs{a})}{\abs{a}}   }\, \abs{a}
  \\
  &= \biggabs{\frac{f(\abs{a})-f(\eps +\abs{a})}{f(\abs{a})\,f(\eps
    +\abs{a})} }\, \abs{a} 
  \\
  &\le  \eps\, \frac {1-\kappa_0}{\vphi'(\abs{a})}
    {\frac{\vphi'(\abs{a} ) \, \vphi'(\eps +\abs{a})}{\abs{a}\,(\eps 
    +\abs{a})} }\, \abs{a} 
  \\
  & \le \eps\, (1-\kappa_0) \frac{ \vphi'(\eps +\abs{a})}{\eps 
    +\abs{a}} \le (1-\kappa_0)\vphi'(\eps)\,,
\end{align*}  
where we used also (C2). 
\end{proof}
Prototypical examples for functions $\vphi$ satisfying the conditions (C1),
(C2) and (C3) are N-functions with $(p,\delta)$-structure.
We say that an N-function $\varphi \in C^1(\R_{\ge 0}) \cap
C^2(\R_{>0}) $ has $(p,\delta)$-structure, with $p\in (1,\infty)$ and
$\delta\ge 0$, if
\begin{equation}
\begin{alignedat}{2}\label{eq:ass}
  \varphi(t) &\eqsim (\delta+t)^{p-2}t^2\,,\qquad &&\textrm{uniformly in $t\ge
            0$}\,,
  \\
  \varphi''(t) &\eqsim (\delta+t)^{p-2}\,,\qquad &&\textrm{uniformly in $t>
            0$}
  \,.
\end{alignedat}
\end{equation}
The constants in these equivalences and $p$ are called characteristics
of $\varphi$. A detailed discussion of N-functions with
$(p,\delta)$-structure can e.g.~be found in~\cite{r-cetraro}. Using \eqref{eq:ass} and the change of shift
\eqref{eq:shift} we easily see that for all $\eps, \delta \ge 0$ we
have uniformly in $t \ge 0$
\begin{align}
  \label{eq:equi}
  \varphi_\eps (t) +\eps ^p + \delta^p\eqsim   t^p +\eps ^p + \delta^p
\end{align}
with constants only depending on $p$.

% The analogue of the scheme \eqref{eq:p-lap}  in our more general
% setting reads: 
% \begin{align}\label{eq:phi-alg}
%   (d_\tau  u^k,v) + \Big(\frac{\vphi_\eps'(|\nabla u^{k-1}|)}{|\nabla u^{k-1}|}\nabla
%   u^k,\nabla v \Big) = 0.
% \end{align}
% % \comment{we can either use the $\eps$-distance or probably we can use
% %   the shifted version $\frac{\vphi_{\eps}'(|\nabla u^{k-1}|)}{|\nabla u^{k-1}|}$}

% \begin{proposition}[Energy stability]\label{prop:ener_stab}
% Under conditions (C1) and (C2) 
% the iterates $(u^k)_{k=1,\dots,K}$ of the scheme~\eqref{eq:phi-alg}
% satisfy for every $1\le L \le K:=\lfloor T/\tau \rfloor$
% \begin{align}
%   \label{eq:ener_bound}
%   E_{\vphi_\eps}[u^L] + \tau \sum_{k=1}^L \|d_\tau  u^k\|^2  
%   + \frac{\tau^2}{2} \sum_{k=1}^L \int_\O \frac{\vphi_\eps'(|\nabla u^{k-1}|)}{|\nabla u^{k-1}|}
%   |d_\tau  \nabla u^k|^2 \dv{x} \le E_{\vphi_\eps}[u^0].
% \end{align}
% \end{proposition}

\section{Convergence of an implicit scheme}\label{sec:fully_impl}

In this section we study abstract evolution equations with
pseudo-monotone operators. Concrete realizations of this situation
will be discussed in the next section.

Let $V$ be a Banach space. An operator $B\colon V\to V^*$ is said to
be monotone if $\langle Bx-By,x-y\rangle_V \geq 0$ for all
$x,y \in V$.  The operator ${B \colon V\to V^*}$ is said to be
pseudo-monotone if $x_n \rightharpoonup x$ in V and
$\limsup_{n\rightarrow \infty} \langle Bx_n,x_n-x \rangle_V\leq 0$
implies
\begin{align*}
  \langle Bx,x-y \rangle_V \leq \liminf\limits_{n\rightarrow \infty} \,
  \langle Bx_n, x_n -y \rangle_V \qquad \text{for all } y \text{ in }
  V. 
\end{align*}  
% Let $W$ be a Banach space such that the embedding $V\hookrightarrow W$
% is dense and let $1<p,q<\infty$.  We say that a function
% $u \in L^p(0,T;V)$ has a generalized derivative in $L^q(0,T;W)$ if
% there exists a function $g \in L^q(0,T;W)$ such that
% \begin{align*}
%   \int_0^T \varphi'(t) u(t) \, dt = - \int_0^T \varphi(t)g(t) \, dt,\quad
%   \textnormal{for all } \varphi \in C_0^\infty((0,T)). 
% \end{align*}
% If such a function $g$ exists, it is unique and we set
% $\frac{du}{dt}: =g$.  

Let $V$ be a separable, reflexive Banach space and $H$ a Hilbert
space. If the embedding $V\hookrightarrow H$ is dense, we call
$(V,H,V^\ast)$ a Gelfand-Triple. Using the Riesz representation
theorem we obtain
$V\hookrightarrow H \cong H^\ast \hookrightarrow V^\ast$ where both
embeddings are dense. In this situation there holds
$(x,y)_H=\langle x,y\rangle _V=\langle y,x\rangle _V$ for all
$x,y \in V$.  
% Let $(V,H,V^\ast)$ be a Gelfand-Triple and
% $1<p<\infty$. In this case the generalized derivative can be
% characterized as follows (cf.~\cite[Proposition 23.20]{zei-IIA}): 
We say that a function $u \in L^p(0,T;V)$ possesses a generalized
derivative in $L^{p'}(0,T;V^\ast)$, where
$\frac{1}{p}+\frac{1}{p'}=1$, if there is a function
$w \in L^{p'}(0,T;V^\ast)$ such that
\begin{align*}
  \int_0^T \left(u(t),v   \right)_H\phi'(t) \, dt = -
  \int_0^T\left\langle w(t) ,v\right\rangle_V\phi(t) \, dt 
\end{align*}
for all $v \in V$ and all $\phi \in C_0^\infty((0,T))$. If such a
function $w$ exists, it is unique and we set $\frac{du}{dt} :=w$. We
define the Bochner--Sobolev space
\begin{align*}
W_p^1(0,T;V,H):= \big\lbrace u\in L^p(0,T;V) \fdg \frac{du}{dt} \in L^{p'}(0,T;V^\ast) \big\rbrace.
\end{align*}
With the norm 
\begin{align*}
\norm{u}_{W_p^1(0,T;V,H)}:= \norm{u}_{L^p(0,T;V)} + \norm{\frac{du}{dt}}_{L^{p'}(0,T;V^\ast) }
\end{align*}
this space is a reflexive Banach space. Moreover, we have that
$W_p^1(0,T;V,H)$ embeds continuously into $C(0,T;H)$ and the following
integration by parts formula
\begin{align*}
  \left( u(t) ,v(t)\right)_H- \left( u(s) ,v(s)\right)_H = \int_s^t
  \left\langle \frac{du}{dt}(\tau),v(\tau)\right\rangle_V 
  +\left\langle \frac{dv}{dt}(\tau),u(\tau)\right\rangle_V \, d\tau
\end{align*}
holds for any $u,v \in W_p^1(0,T;V,H)$ and arbitrary $0\leq s,t\leq T$
(cf.~\cite[Proposition 23.23]{zei-IIA}).

We study the following evolution equation with a
pseudo-monotone operator $B$:
\begin{align}\label{eq:pm}
  \begin{aligned}
    \frac{du}{dt}(t) +Bu(t)&=f(t) &&\text{in }V^\ast \text{ for a.e.} \ t \in [0,T],\\
    u(0)&=u^0 &&\text{in}\ H.
  \end{aligned}
\end{align}

To establish the existence of solutions we make the following
assumptions on the operator $B$.
\begin{ass}[Operator]\label{voroperatorfamilie}
  Let $(V,H,V^*)$ be a Gelfand triple and let $B\colon V\to V^\ast$ be
  an operator with the following properties:
\begin{itemize}
\item[\textnormal{(A1)}] $B$ is pseudo-monotone. 
% \item[\textnormal{(A2)}] For every
%   $u \in L^p(0,T;V)\cap L^\infty(0,T;H)$ the mapping
%   $t \mapsto Bu(t)$ from $[0,T]$ to $V^\ast$ is Bochner-measurable.
\item[\textnormal{(A2)}] There exist constants $c_1>0$,  $c_2 \ge 0$,
  $c_3 \ge 0$,  such that for all $x\in V$
\begin{align*}
  \langle Bx,x  \rangle_V \geq c_1 \norm{x}_V^{p} -c_2 \norm{x}_H^2 -c_3\,.
\end{align*}
\item[\textnormal{(A3)}] There exists $0\le q<\infty$, as well as
  constants $c_4>0$, $c_5\geq 0$ and $c_6\ge 0$, such that for all $x\in V$
\begin{align*}
  \norm{Bx}_{V^\ast}\leq c_4\norm{x}_V^{p-1}+ c_5 \norm{x}_H^q
  \norm{x}_V^{p-1} +  c_6\, .
\end{align*}
\end{itemize}
\end{ass}
Under this assumption we have (cf.~\cite[Chapters 27, 30]{zei-IIB}):
\begin{lemma}\label{induzierterOp}
  Assume that the operator $B\colon V\to V^\ast$ satisfies Assumption
  \ref{voroperatorfamilie}. Then the in\-duced operator
  $({B}u)(t):=Bu(t)$ maps the space \linebreak ${ L^p(0,T;V)\cap
  L^\infty(0,T;H)}$ into $L^{p'}(0,T;V^\ast)$ and is bounded.
\end{lemma}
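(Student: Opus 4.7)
The plan is to use the growth condition (A3) to estimate $\|Bu(t)\|_{V^*}$ pointwise and then integrate in time, once measurability of the composed map has been settled. The key algebraic fact that makes everything work is the identity $(p-1)p' = p$, which exactly matches the $V$-norm power on the right-hand side of (A3) to the integrability exponent of $u$ in $L^p(0,T;V)$.

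I would proceed in two steps. First, establish that $t\mapsto Bu(t)$ is strongly measurable with values in $V^*$. Since $u\in L^p(0,T;V)$ is strongly measurable into the separable space $V$, and since $B$ is bounded by (A3) and pseudo-monotone by (A1) (hence demicontinuous on the reflexive space $V$ via the usual bounded pseudo-monotone argument), the composition $t\mapsto Bu(t)$ is weakly measurable into the separable space $V^*$; by Pettis' theorem it is then strongly measurable. Second, use (A3) pointwise and raise to the power $p'$ to get
\begin{equation*}
\|Bu(t)\|_{V^*}^{p'} \lesssim \|u(t)\|_V^{(p-1)p'} + \|u(t)\|_H^{qp'}\|u(t)\|_V^{(p-1)p'} + c_6^{p'},
\end{equation*}
where the implicit constant depends only on $p$, $c_4$, $c_5$. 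Integrating on $(0,T)$ and using $(p-1)p'=p$ together with the trivial bound $\|u(t)\|_H \le \|u\|_{L^\infty(0,T;H)}$ gives
\begin{equation*}
\|Bu\|_{L^{p'}(0,T;V^*)}^{p'} \lesssim \|u\|_{L^p(0,T;V)}^{p} \bigl(1+\|u\|_{L^\infty(0,T;H)}^{qp'}\bigr) + c_6^{p'} T,
\end{equation*}
which is exactly the claimed mapping property together with boundedness on bounded subsets of $L^p(0,T;V)\cap L^\infty(0,T;H)$.

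The only step that is not completely routine is the measurability argument; the integral estimate itself is a direct consequence of (A3) and the exponent identity $(p-1)p' = p$. If one wishes to avoid invoking demicontinuity explicitly, one can instead approximate $u$ by simple functions with values in $V$, note that $Bu$ is then explicitly measurable, and pass to the limit using the uniform growth bound (A3) combined with dominated convergence, which is the approach indicated in \cite[Chapters 27, 30]{zei-IIB}.
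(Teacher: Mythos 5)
Your proof is correct. The paper states this lemma without proof and simply cites Zeidler's book; your argument --- strong measurability of $t\mapsto Bu(t)$ via demicontinuity of $B$ (which indeed follows from pseudo-monotonicity together with the boundedness implied by (A3) and the embedding $V\hookrightarrow H$) plus Pettis' theorem, followed by raising the pointwise bound from (A3) to the power $p'$ and integrating with the identity $(p-1)p'=p$ --- is precisely the standard argument that reference supplies.
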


Previous existence results that we are aware of are based on either a Rothe approximation
(cf.~\cite{Roub}) or a Galerkin approximation (cf.~\cite{br-hirano}).
We want to establish the existence of a solution of \eqref{eq:pm} with the
help of a convergence proof of a Rothe-Galerkin scheme. To this end we
introduce some notation. For each
$K \in \N$ we set ${\tau:=\frac TK}$, $t_k= t_k^\tau:=k \tau $, $k=0,\ldots, K$
and $I_k=I_k^\tau:=(t_{k-1},t_k]$,  ${k=1,\ldots, K}$. The backward difference
quotient operator is defined as 
%\marginpar{maybe better ${d_\tau c^k:=\tau^{-1}(c^k-c^{k-1})}$}%
$$
{d_\tau c^k:=\tau^{-1}(c^k-c^{k-1})}.
$$
For a given finite sequence $(c^k)_{k=0,\ldots, K}$ we denote by
$\bar c^\tau$ the piecewise constant interpolant and by $\hat c^\tau$
the piecewise affine interpolant, i.e.~$\hat c^\tau(t)=\big( \frac t\tau -(k-1)\big)c^k +\big(k- \frac t\tau
\big)c^{k-1}$, $\bar c ^\tau (t) =c^k $, $t \in I_k$, $\bar c^\tau (0)=\hat c ^\tau
(0)=c^0$. Note that $\frac {d\hat c^\tau(t)}{dt}= d_\tau c^k$ for all
$t \in (t_{k-1},t_k)$.

\begin{ass}[Data]\label{ass:data}
  Let $(V,H,V^*)$ be a Gelfand triple. We assume that $u^0\in H$ and
  $f \in L^{p'}(0,T;V^\ast)$. Moreover, we assume that there exists an
  increasing sequence of finite dimensional subspaces $V_M$, $M\in \N$,
  such that $\bigcup _{M\in \N} V_M$ is dense in $V$.  Finally, we
  assume that there exist $u_M^0 \in V_M$ such that $u_M^0 \to u^0$ in
  $H$, and that there exists a sequence $f_M \in C(0,T;V^\ast)$ such
  that $f_M\to f $ in $L^{p'}(0,T;V^\ast)$.
\end{ass}

For each $M\in \N$ and given $u^0_M \in V_M$ the sequence of iterates
$(u^k_M)_{k=0,\ldots, K}\subseteq V_M$ is given via the implicit scheme
\begin{align}
  \label{eq:rg}
  \langle d_\tau u_M^k,v_M\rangle _V + \langle Bu_M^k,v_M\rangle _V= \langle
  f_M(t_k),v_M\rangle_V \qquad \forall v_M \in V_M.
\end{align}

\begin{theorem}[Convergence of the implicit scheme]\label{maintheorem}
  Let Assumption \ref{voroperatorfamilie} and \ref{ass:data} be
  satisfied. Let $\bar u_n:=\bar u_{M_n}^{\tau_n}$ be a sequence of
  piecewise constant interpolants generated by iterates
  $(u^k_{M_n})_{k=0,\ldots, K_n}$, $K_n=\frac T{\tau_n}$, solving
  \eqref{eq:rg} for some sequences $M_n\to \infty$, $\tau_n\to 0$.
  Then each weak$^\ast $ accumulation point $u$ of the sequence
  $(\bar u_n)_{n\in \N}$ in the space $L^\infty(0,T;H)\cap L^p(0,T;V)$
  belongs to the space $ W_p^1(0,T;V,H)$ and is a solution of
  \eqref{eq:pm}.
\end{theorem}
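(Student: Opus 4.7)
My plan is to proceed in three stages: derive a priori estimates and extract weak limits; identify the time derivative of the limit; and---the most delicate step---identify the weak limit of the nonlinear term $\bar{B}u_n$ as $Bu$ via a pseudo-monotonicity argument. Existence of the iterates $u_M^k$ on each time step follows from a standard surjectivity result for pseudo-monotone coercive operators on the finite-dimensional subspace $V_M$. Testing \eqref{eq:rg} with $v_M = u_M^k$ and using the discrete chain rule $(d_\tau u_M^k, u_M^k)_H = \tfrac12 d_\tau \|u_M^k\|_H^2 + \tfrac{\tau}{2}\|d_\tau u_M^k\|_H^2$, coercivity (A2), Young's inequality on $\langle f_M(t_k), u_M^k\rangle_V$, and a discrete Gronwall argument to absorb the $c_2\|u_M^k\|_H^2$ term yields
\[
\max_k \|u_M^k\|_H^2 + \tau \sum_{k=1}^{K_n} \|u_M^k\|_V^p + \sum_{k=1}^{K_n} \|u_M^k - u_M^{k-1}\|_H^2 \lesssim 1
\]
uniformly in $M,\tau$. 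Combined with (A3) and Lemma~\ref{induzierterOp} this gives uniform bounds on $\bar u_n, \hat u_n$ in $L^\infty(0,T;H)\cap L^p(0,T;V)$ and on $\bar B u_n$ in $L^{p'}(0,T;V^\ast)$; hence along a subsequence $\bar u_n \stackrel{\ast}{\rightharpoonup} u$ and $\bar B u_n \rightharpoonup \xi$ in the respective spaces, and since $\|\bar u_n - \hat u_n\|_{L^2(0,T;H)}^2 \le \tau \sum_k \|u_M^k - u_M^{k-1}\|_H^2 \to 0$, $\hat u_n$ shares the weak limit $u$.

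To identify $\tfrac{du}{dt}$, fix $v\in V_{M_0}$ and $\phi \in C_0^\infty(0,T)$; for $n$ large enough $v\in V_{M_n}$, so multiplying \eqref{eq:rg} by $\tau\phi(t_k)$, summing from $k=1$ to $K_n$, and performing a discrete summation by parts that transfers the difference quotient from $\hat u_n$ onto $\phi$, passage to the limit (using $f_n\to f$ in $L^{p'}(0,T;V^\ast)$) gives
\[
-\int_0^T (u,v)_H\, \phi'(t)\,dt + \int_0^T \langle \xi, v\rangle_V \phi(t)\,dt = \int_0^T \langle f, v\rangle_V \phi(t)\,dt.
\]
Density of $\bigcup_M V_M$ in $V$ (Assumption~\ref{ass:data}) extends this to all $v\in V$, whence $\tfrac{du}{dt} = f-\xi \in L^{p'}(0,T;V^\ast)$ and $u\in W_p^1(0,T;V,H)\hookrightarrow C(0,T;H)$; the initial condition $u(0)=u^0$ then follows from $u_M^0\to u^0$ in $H$ combined with a test-function argument using $\phi \in C^\infty([0,T])$ supported near $t=0$.

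The main obstacle is the identification $\xi=Bu$, which is where the pseudo-monotonicity of $B$ must be invoked---the essential difficulty being that the fully discrete scheme provides no direct control on $d_\tau u_M^k$ in $V^\ast$. Following a Hirano--Landes / condition (M) approach, I test \eqref{eq:rg} with $u_M^k$ and sum over $k=1,\dots,K_n$; the discrete chain rule produces
\[
\tau\sum_{k=1}^{K_n} \langle Bu_M^k, u_M^k\rangle_V + \tfrac12 \|u_M^{K_n}\|_H^2 + \tfrac12 \sum_{k=1}^{K_n} \|u_M^k - u_M^{k-1}\|_H^2 = \tfrac12 \|u_M^0\|_H^2 + \tau\sum_{k=1}^{K_n} \langle f_M(t_k), u_M^k\rangle_V.
\]
Dropping the nonnegative jump term, taking $\limsup_n$, and combining weak lower semicontinuity of $\|\cdot\|_H$ at $t=T$ (for which $\bar u_n(T)\rightharpoonup u(T)$ in $H$ is obtained via the affine interpolant and the bound on the generalized derivative) with the continuous integration-by-parts identity $\tfrac12(\|u(T)\|_H^2-\|u^0\|_H^2) = \int_0^T \langle f-\xi, u\rangle_V\,dt$ applied to the limit equation, I obtain $\limsup_n \int_0^T \langle \bar B u_n, \bar u_n\rangle_V\,dt \le \int_0^T \langle \xi, u\rangle_V\,dt$. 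Subtracting the identity $\int_0^T \langle \bar B u_n, u\rangle_V\,dt \to \int_0^T \langle \xi, u\rangle_V\,dt$ yields $\limsup_n \int_0^T \langle \bar B u_n, \bar u_n - u\rangle_V\,dt \le 0$, and a Hirano--Landes Fatou argument lifts the pointwise pseudo-monotonicity (A1) to the Nemytskii operator on $L^p(0,T;V)$ to conclude $\xi = Bu$ a.e.\ in $(0,T)$.
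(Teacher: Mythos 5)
Your outline follows the paper's overall structure: a priori estimates from testing with $u_M^k$, identification of $\frac{du}{dt}=f-\xi$ via test functions $v\in V_{M_0}$ and density of $\bigcup V_M$, and verification of the $\limsup$ inequality by discrete integration by parts, weak lower semicontinuity of $\|\bar u_n(T)\|_H$, and the continuous energy identity. Those steps are correct and match the paper.

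However, there is a genuine gap at the final step: the phrase ``a Hirano--Landes Fatou argument lifts the pointwise pseudo-monotonicity (A1) to the Nemytskii operator'' hides the central difficulty of the whole theorem. Proposition~\ref{Hirano} requires, in addition to weak convergence in $L^p(0,T;V)$ and the $\limsup$ condition, the pointwise weak convergence $\bar u_n(t)\rightharpoonup \bar u(t)$ in $H$ for a.e.\ $t\in(0,T)$ (hypothesis~\eqref{Hiranovoraussetzung}$_3$). You never establish this. The usual way to get it is via a uniform bound on $\frac{d\hat u_n}{dt}$ in $L^{p'}(0,T;Z^\ast)$ for some fixed $Z\hookrightarrow V$, and you invoke exactly this (``the bound on the generalized derivative'') in passing when identifying $\bar u_n(T)\rightharpoonup u(T)$. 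But no such bound is available for the fully discrete Rothe--Galerkin scheme: from \eqref{eq:rg} you only control $d_\tau u_M^k$ in $V_{M}^\ast$, i.e.\ against test functions in the $M$-dependent subspace, and this is precisely why classical Aubin--Lions arguments fail here and why the paper replaces that hypothesis by the pointwise $H$-weak convergence. The paper's proof spends most of its effort precisely on this step: it first shows $\bar u=\hat u$ in $L^2(0,T;H)$, then proves that $(\hat u_n(s))_n$ is a weak Cauchy sequence in $H$ for every $s$ by subtracting two approximate equations tested against $v\in V_k$ and using \eqref{konv} (see \eqref{eq:cauchy}), then upgrades this to $\hat u_n(t)\rightharpoonup\bar u(t)$ via dominated convergence and \eqref{konv}$_4$, and finally transfers the pointwise limit from $\hat u_n$ to $\bar u_n$ by estimating $\|\hat u_n-\bar u_n\|_{L^{p'}(0,T;V_m^\ast)}$ using the equations themselves and a diagonal argument. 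Without some version of this Cauchy-in-$H$ argument, your invocation of pseudo-monotonicity does not close; your estimate $\|\hat u_n-\bar u_n\|_{L^2(0,T;H)}\to 0$ transfers a pointwise limit from $\hat u_n$ to $\bar u_n$ along a subsequence, but does not by itself produce the pointwise weak convergence of $\hat u_n(t)$ that is needed.
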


The proof of this theorem is based on a generalization of Hirano's
lemma (cf.~\cite{Shi}, \cite{Roub}) using ideas from \cite{landes-87},
\cite{landes-86}. The advantage of this generalization is that it
avoids a technical assumption on the existence of suitable projections
(cf.~\cite{br-hirano}).
\begin{proposition}[Hirano, Landes]\label{Hirano}
  Let Assumption \ref{voroperatorfamilie}  be
  satisfied. Further assume that the sequence 
  $(u_n)$ is bounded in $  L^p(0,T;V)\cap L^\infty(0,T;H) $ and satisfies
  \begin{align}\label{Hiranovoraussetzung}
    \begin{aligned}
      u_n&\rightharpoonup u\quad\text{ in }L^p(0,T;V),\\
      u_n&\stackrel{*}\rightharpoonup u\quad\text{ in }L^\infty(0,T;H),\\
      u_n(t)&\rightharpoonup u(t)\quad\text{ in }H\text{ for almost all
      }t\in (0,T),\\
      \limsup\limits_{n\rightarrow \infty}\, &\langle
      {B}u_n,u_n-u\rangle_{L^p(0,T;V)} \leq 0.
    \end{aligned}
  \end{align} 
  Then for any $z \in L^p(0,T;V)$ there holds
  \begin{align}\label{Hiranoresultat}
    \langle {B}u,u -z\rangle_{L^p(0,T;V)}
    \leq\liminf\limits_{n\rightarrow \infty}\, \langle
    {B}u_n,u_n-z\rangle_{L^p(0,T;V)}. 
  \end{align}
  Moreover, ${B}u_n \rightharpoonup {B}u$ in $L^p(0,T;V)^\ast= L^{p'}(0,T;V^\ast)$.
\end{proposition}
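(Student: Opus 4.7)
\emph{Proof plan.} My approach is to combine a Fatou-type argument applied to the pointwise quantity $X_n(t) := \langle Bu_n(t), u_n(t) - u(t)\rangle_V$ with pointwise application of the pseudo-monotonicity of $B$ along suitable $t$-dependent subsequences. As a preliminary step, I would use (A3) together with the boundedness of $u_n$ in $L^p(0,T;V) \cap L^\infty(0,T;H)$ to conclude that $Bu_n$ is bounded in $L^{p'}(0,T;V^*)$; after passing to a subsequence, $Bu_n \rightharpoonup \chi$ in this space. The remaining task splits into proving the liminf inequality \eqref{Hiranoresultat} and identifying $\chi = Bu$.

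Using (A2), (A3), Young's inequality, and the uniform $L^\infty(0,T;H)$ bound to absorb and dominate the cross terms, I would establish an $n$-independent pointwise minorization $X_n(t) \geq -g(t)$ with $g \in L^1(0,T)$. Fatou's lemma, combined with the limsup hypothesis in \eqref{Hiranovoraussetzung}, then yields $\int_0^T \liminf_n X_n(t)\,\dv{t} \leq \liminf_n \int_0^T X_n \leq 0$.

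The decisive step is to show $\liminf_n X_n(t) \geq 0$ for a.e.~$t$. I would argue by contradiction: on a set of positive measure where $\liminf_n X_n(t) < -\delta$, extract a subsequence with $X_{n_k}(t) \leq -\delta/2$. The coercive lower bound of the form $X_n(t) \geq c_1 \|u_n(t)\|_V^p - C(1 + \|u_n(t)\|_V^{p-1})$ stemming from (A2) and (A3) then forces $\|u_{n_k}(t)\|_V$ to be bounded, so on a further subsequence $u_{n_k}(t) \rightharpoonup u(t)$ in $V$ by uniqueness of the weak limit in $H$. Pseudo-monotonicity at $y = u(t)$ then gives $\liminf_k X_{n_k}(t) \geq 0$, contradicting $X_{n_k}(t) \leq -\delta/2$. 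Combined with the integrated Fatou estimate, this forces $\liminf_n X_n(t) = 0$ for a.e.~$t$.

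For arbitrary $z \in L^p(0,T;V)$, the analogous bound $Y_n(t) := \langle Bu_n(t), u_n(t) - z(t)\rangle_V \geq -h_z(t)$ holds with $h_z \in L^1$. For a.e.~$t$, I would pick a subsequence $\{n_k\}$ realizing $Y_{n_k}(t) \to \liminf_n Y_n(t)$; since the hypotheses of the proposition are inherited by $\{u_{n_k}\}$, the same reasoning gives $\liminf_k X_{n_k}(t) = 0$ a.e., so one can extract a sub-subsequence along which $X_{n_{k_j}}(t) \to 0$ and $u_{n_{k_j}}(t) \rightharpoonup u(t)$ in $V$. Pseudo-monotonicity applied with $y = z(t)$ yields the pointwise bound $\liminf_n Y_n(t) \geq \langle Bu(t), u(t) - z(t)\rangle_V$ a.e., and Fatou's lemma integrates this to \eqref{Hiranoresultat}. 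To deduce $Bu_n \rightharpoonup Bu$, I would invoke \eqref{Hiranoresultat} with $z = u \pm \lambda w$ for arbitrary $w \in L^p(0,T;V)$ and $\lambda > 0$; combining the two inequalities with the limsup hypothesis in \eqref{Hiranovoraussetzung} produces the two-sided bound $\langle Bu_n, w\rangle_{L^p(V)} \to \langle Bu, w\rangle_{L^p(V)}$, which together with the uniform $L^{p'}(V^*)$ bound on $Bu_n$ identifies $\chi = Bu$ and gives the desired weak convergence. The main obstacle is the iterated subsequence extraction in the pointwise liminf step, where $t$-dependent subsequences must be handled carefully; the uniform $L^1$ minorants for $X_n$ and $Y_n$ (relying crucially on the $L^\infty(0,T;H)$ bound to control the subcritical term $\|x\|_H^q \|x\|_V^{p-1}$ from (A3)) are the technical prerequisites that make Fatou's lemma applicable throughout.
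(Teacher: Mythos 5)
Your plan follows the same Hirano--Landes route that the paper defers to in \cite{br-hirano}: a coercive pointwise minorization $X_n(t)\ge -g(t)$, $g\in L^1(0,T)$, built from (A2)--(A3) and the uniform $L^\infty(0,T;H)$ bound; Fatou combined with the limsup hypothesis; a pointwise contradiction argument (coercivity, weak compactness in $V$, identification of the $V$-limit via the $H$-limit, then pseudo-monotonicity) giving $\liminf_n X_n(t)\ge 0$ a.e.; and the $z=u\pm\lambda w$ substitution at the end. These ingredients, including the final deduction $Bu_n\rightharpoonup Bu$, are all sound.

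There is, however, a genuine gap in the step for a general $z$. You pick, for each fixed $t$, a subsequence $\{n_k\}=\{n_k(t)\}$ realizing $\liminf_n Y_n(t)$ and then assert that the hypotheses are ``inherited by $\{u_{n_k}\}$'' so that ``the same reasoning gives $\liminf_k X_{n_k}(t)=0$.'' Neither claim is justified: since $\{n_k(t)\}$ depends on $t$, $\{u_{n_k}\}$ is not a subsequence of functions and the integrated Fatou argument cannot be run on it; and $\liminf_n X_n(t)=0$ for the full sequence does \emph{not} imply $\liminf_k X_{n_k}(t)=0$ along a subsequence chosen to realize a different liminf --- the indices realizing $\liminf_n Y_n(t)$ may be precisely those where $X_n(t)$ stays bounded away from $0$. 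Without $\limsup_k X_{n_k}(t)\le 0$ along some further subsequence you cannot invoke pseudo-monotonicity at $y=z(t)$. The repair is to strengthen the intermediate conclusion to $X_n\to 0$ in $L^1(0,T)$: from $\liminf_n X_n(t)\ge 0$ a.e.\ one gets $X_n^-\to 0$ a.e., hence $\int_0^T X_n^-\,\dv{t}\to 0$ by dominated convergence, and then $\int_0^T X_n^+\,\dv{t}=\int_0^T X_n\,\dv{t}+\int_0^T X_n^-\,\dv{t}\to 0$. One then proves \eqref{Hiranoresultat} along a \emph{fixed} subsequence realizing $\liminf_n\langle Bu_n,u_n-z\rangle_{L^p(0,T;V)}$, passing to a further $t$-\emph{independent} sub-subsequence on which $X_n(t)\to 0$ a.e.; for that sequence the pointwise pseudo-monotonicity argument is legitimate ($X_n(t)\to 0$ plus coercivity gives $V$-boundedness, the $H$-limit identifies the weak $V$-limit, and $\limsup_n X_n(t)=0$ holds automatically), and Fatou then delivers the estimate, whose right-hand side equals $\liminf_n\int_0^T Y_n\,\dv{t}$ by the choice of subsequence. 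This extraction of a single $t$-independent subsequence along which pointwise convergence holds a.e.\ is exactly the bookkeeping carried out in \cite{br-hirano} (their equations (4.5), (4.8)), and it is the step your write-up elides.
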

\begin{proof}
  The proof is almost identical with the proof of \cite[Lemma
  4.2]{br-hirano}. First note that from assumptions (A2), (A3) we can
  derive for all $x \in L^p(0,T;V)\cap L^\infty(0,T;H) $ with
  $\norm{x}_{L^\infty(0,T;H) }\le K$, all $y \in L^p(0,T;V)$ and
  almost all $t \in (0,T)$
  \begin{align*}
    \langle {B}x(t),x(t) -y(t)\rangle_{L^p(0,T;V)}\ge k_1
    \norm{x(t)}_V^p- k_2 \norm{y(t)}_V^p-k_3\,,
  \end{align*}
  with positive constants $k_i$, $i=1,2,3$, depending on $K$ and
  $c_j$, $j=1,\ldots, 6$. The last inequality is exactly inequality
  (4.4) in \cite{br-hirano}, which is crucial for the proof of
  Lemma~4.2 there. Note, that assumption
  \eqref{Hiranovoraussetzung}$_3$ is not present in the formulation of
  \cite[Lemma 4.2]{br-hirano}, but it is assumed instead that $(u_n)$
  is bounded in $L^{p'}(0,T;Z^\ast)$, for a certain separable,
  reflexive Banach space $Z$ with $Z \hookrightarrow V$. This
  assumption is solely used to identify the pointwise limits
  $u_n(t) \rightharpoonup u(t)$ in $Z^\ast$ for all $t\in [0,T]$
  (cf.~\cite[equation (4.5)]{br-hirano}). This identification together
  with the embedding $V \hookrightarrow Z^\ast$ implies for a certain
  subsequence $u_{n_k}(t) \rightharpoonup u(t)$ in $V$ for almost
  all $t\in [0,T]$ (cf.~\cite[equation (4.8)]{br-hirano}). This
  argument is replaced by our assumption
  \eqref{Hiranovoraussetzung}$_3$, that also identifies the pointwise
  limits of $(u_n(t))$ in $H$.  This and the embedding
  $V\hookrightarrow H$ again yield that for a certain subsequence
  $u_{n_k}(t) \rightharpoonup u(t)$ in $V$ for almost all
  $t\in [0,T]$. After that the proof can be finished in an identical
  manner as in \cite{br-hirano}.
\end{proof}

We will also use a slight modification of the following compactness
result of Landes, Mustonen \cite{landes-87}, which is an alternative to
the Aubin-Lions lemma in the case of Sobolev spaces.

\begin{proposition}\label{pro:comp}
  Let $p,s \in (1,\infty)$, $q \in [1,p^*)$, where  $p^*:=
  \frac{dp}{d-p}$ if $p<d$, and $p^*:=\infty$ if $p\ge d$. Let  
  $(u_n)$ be a bounded sequence in $L^\infty(0,T;L^1(\Omega))$ such that
  \begin{alignat*}{2}
    u_n&\rightharpoonup u&&\text{ in }L^s(0,T;W^{1,p}_0(\Omega)),\\
   u_n(t)&\rightharpoonup u(t)\quad &&\text{ in }L^1(\Omega)\text{ for almost all }t\in (0,T),
  \end{alignat*}
  than $u_n \to u$ in $L^s(0,T;L^q(\Omega))$.
\end{proposition}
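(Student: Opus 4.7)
The approach I would take is Vitali's convergence theorem applied to the scalar sequence $f_n(t):=\|u_n(t)-u(t)\|_{L^q(\Omega)}^s$: it suffices to establish equi-integrability of $(f_n)$ in $L^1(0,T)$ together with convergence to zero in measure on $(0,T)$. For equi-integrability, I would fix $r\in(q,p^*)$; the continuous Sobolev embedding $W^{1,p}_0(\Omega)\hookrightarrow L^r(\Omega)$ (valid because $r<p^*$) combined with the weak convergence in $L^s(0,T;W^{1,p}_0(\Omega))$ yields a uniform bound of $\|u_n(\cdot)-u(\cdot)\|_{L^r(\Omega)}$ in $L^s(0,T)$. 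Interpolation between $L^1(\Omega)$ and $L^r(\Omega)$ gives $\alpha\in(0,1)$ with
\[
\|u_n(t)-u(t)\|_{L^q}\le \|u_n(t)-u(t)\|_{L^1}^\alpha\|u_n(t)-u(t)\|_{L^r}^{1-\alpha},
\]
and the $L^\infty(0,T;L^1(\Omega))$ bound controls the first factor. Thus $f_n(t)$ is bounded by a multiple of $\|u_n(t)-u(t)\|_{L^r}^{s(1-\alpha)}$, which is bounded in $L^{1/(1-\alpha)}(0,T)$; since $1/(1-\alpha)>1$, the sequence $(f_n)$ is equi-integrable in $L^1(0,T)$.

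For convergence in measure I would argue by contradiction. If it fails, then after passing to a subsequence there exist $\delta,c>0$ with $|\{t:f_n(t)>\delta\}|\ge c$ for every $n$. Choosing $R$ so large that Chebyshev's inequality applied to the uniform $L^s$-bound of $\|u_n(\cdot)\|_{W^{1,p}_0}$ yields $|\{t:\|u_n(t)\|_{W^{1,p}_0}>R\}|<c/2$ for all $n$, the sets
\[
F_n:=\{t:f_n(t)>\delta\}\cap\{t:\|u_n(t)\|_{W^{1,p}_0}\le R\}
\]
satisfy $|F_n|\ge c/2$; the reverse Fatou inequality for indicator functions in the finite-measure space $(0,T)$ then gives $|\limsup_n F_n|\ge \limsup_n|F_n|\ge c/2>0$. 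Intersecting $\limsup_n F_n$ with the full-measure set on which $u_n(t)\rightharpoonup u(t)$ in $L^1(\Omega)$, I pick $t$ such that $t\in F_{n_j}$ along some subsequence $n_j$. At this $t$, the sequence $(u_{n_j}(t))_j$ is bounded by $R$ in $W^{1,p}_0(\Omega)$, and the compact Rellich--Kondrachov embedding $W^{1,p}_0(\Omega)\hookrightarrow\hookrightarrow L^q(\Omega)$ (valid for $q<p^*$) delivers a further subsequence converging strongly in $L^q(\Omega)$, with limit forced to be $u(t)$ by the pointwise weak $L^1$-convergence. This contradicts the inequality $\|u_{n_j}(t)-u(t)\|_{L^q}>\delta^{1/s}$ enforced along the subsequence $n_j$.

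The main obstacle is that only an integrated $W^{1,p}_0$-bound in $L^s(0,T)$ is available, so pointwise boundedness of $\|u_n(t)\|_{W^{1,p}_0}$ along a single subsequence for almost every $t$ cannot be arranged a priori; one cannot simply extract a subsequence and invoke the compact Sobolev embedding pointwise. The contradiction together with the $\limsup$-set construction above circumvents this by producing, on a set of $t$'s of positive measure, an index subsequence on which the Sobolev bound does hold, which is exactly enough to derive a contradiction with the hypothetical failure of convergence in measure. This measure-theoretic manoeuvre is the essential modification of the Landes--Mustonen argument required under the weaker pointwise weak $L^1$-convergence hypothesis.
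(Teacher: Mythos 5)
Your proof is correct but follows a genuinely different route from the paper's. The paper simply cites Landes and Mustonen for $u_n\to u$ in $L^s(0,T;L^p(\Omega))$ (which already covers $q\le p$) and then treats $q\in(p,p^*)$ via the Gagliardo--Nirenberg interpolation $\|v\|_q\le\|v\|_p^{1-\lambda}\|\nabla v\|_p^\lambda$ together with H\"older's inequality in time. You instead give a self-contained proof by Vitali's theorem applied to $f_n(t)=\|u_n(t)-u(t)\|_{L^q(\Omega)}^s$: equi-integrability comes from interpolating $L^q(\Omega)$ between $L^1(\Omega)$ (bounded uniformly in $t$ by the $L^\infty(0,T;L^1(\Omega))$ hypothesis) and $L^r(\Omega)$ for some $r\in(q,p^*)$ (controlled in $L^s(0,T)$ via the continuous Sobolev embedding), which places $(f_n)$ in a bounded set of $L^{1/(1-\alpha)}(0,T)$ with exponent strictly greater than one; convergence of $f_n$ to zero in measure is then obtained by contradiction, combining Chebyshev on the $L^s(0,T;W^{1,p}_0(\Omega))$ bound with reverse Fatou to produce a positive-measure set of times $t$ at which $u_{n_j}(t)$ stays bounded in $W^{1,p}_0(\Omega)$ along an index subsequence, so that Rellich--Kondrachov together with the pointwise weak $L^1(\Omega)$ convergence forces $u_{n_j}(t)\to u(t)$ in $L^q(\Omega)$, contradicting $f_{n_j}(t)>\delta$. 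The paper's version is shorter on the page because it delegates the core compactness step to an external reference; yours reconstructs that mechanism from scratch, handles all $q<p^*$ in one stroke without a separate interpolation step, and makes explicit exactly where the pointwise weak $L^1(\Omega)$ convergence hypothesis enters, namely to identify the strong limit at the fixed times produced by the $\limsup$-set construction. One small point worth adding to your write-up: you implicitly use $u\in L^\infty(0,T;L^1(\Omega))$ when bounding $\|u_n(t)-u(t)\|_{L^1(\Omega)}$ uniformly in $t$; this follows from the hypothesis on $(u_n)$, the pointwise weak $L^1(\Omega)$ convergence, and weak lower semicontinuity of the norm, but it deserves a sentence.
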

\begin{proof}
  In \cite{landes-87} it is shown that from our assumptions follows
  $u_n \to u$ in $L^s(0,T;L^p(\Omega))$, which is the stated
  assertion if $q\le p$. For $q\in (p,p^*)$  we use this convergence, the interpolation
  $\|v\|_q\le \|v\|_p^{1-\lambda} \|\nabla v\|_{p}^\lambda $, for
  appropriate $\lambda \in (0,1)$ and H\"older's inequality after
  integration in time.
\end{proof}

\begin{proof}[Proof of Theorem \ref{maintheorem}]
  We want to use Proposition \ref{Hirano}. Thus we have to verify all
  conditions in \eqref{Hiranovoraussetzung} for an appropriate
  sequence. To this end we proceed as follows: (i) existence of
  iterates and a~priori estimates, (ii) identification of pointwise
  limit and (iii) verification of condition \eqref{Hiranovoraussetzung}$_4$. 
  
  {\it (i) existence of iterates and a~priori estimates}: For each $M\in \N $
  and each $\tau =\frac TK$, $K \in \N$, we obtain the existence of
  iterates $(u^k_M)_{k=0,\ldots, K} \subseteq V_M$ solving
  \eqref{eq:rg} from Brouwer's fixed point theorem. Using $v_M=u_M^k$
  in \eqref{eq:rg} we obtain in a standard manner the estimate
  \begin{align}
   \label{eq:apriori0}
    \begin{aligned}
      &\frac 12 \|u^\ell_M\|_H^2 +\frac {c_1}{p'} \tau \sum_{k=1}^\ell
      \|u^k_M\|_V^p
      \\
      &\quad \le \frac 12 \|u^0_M\|_H^2 +c_2 \, \tau \sum_{k=1}^\ell
      \|u^k_M\|_{H}^{2} +\frac {c_1^{\frac {-1}{p-1}}}{p'}\, \tau
      \sum_{k=1}^\ell \|f_M(t_k)\|_{V^\ast}^{p'}
    \end{aligned}
  \end{align}
  valid for all $\ell=1,\ldots, K$. Denoting by
  ${\bar{f}}^\tau_M, {\hat{f}}^\tau_M$ the interpolants generated by
  $(f_M(t_k))_{k=0,\ldots, K}$, it follows from Assumption
  \ref{ass:data} that both ${\bar{f}}^\tau_M \to f$ and
  ${\hat{f}}^\tau_M \to f$ in $L^{p'}(0,T;V^\ast)$ as $M\to \infty$,
  $\tau \to 0$. Consequently we get that the first and the last term
  on the right-hand side in \eqref{eq:apriori0} are uniformly bounded
  with respect to $\ell \in \{1,\ldots, K\}$, $M\in \N$ and
  $\tau \le \tau_0$. From discrete Gronwall's inequality we deduce 
  that the left-hand side of \eqref{eq:apriori0} is uniformly bounded
  with respect to $\ell \in \{1,\ldots, K\}$, $M\in \N$ and
  $\tau \le \tau_0$. Thus the interpolants generated by
  $(u^k_M)_{k=0,\ldots, K}$ satisfy for all $M\in \N$,
  $\tau \le \tau_0$
  \begin{align}
    \label{eq:apri1}
    \begin{aligned}
      \|\bar u_M^\tau\|_{L^\infty(0,T;H)}+\|\bar
      u_M^\tau\|_{L^p(0,T;V)} &\le c(T, \|u^0\|_H,
      \|f\|_{L^{p'}(0,T;V^\ast)})\,,
      \\
      \|\hat u_M^\tau\|_{L^\infty(0,T;H)} &\le c(T,\|u^0\|_H,
      \|f\|_{L^{p'}(0,T;V^\ast)})\,.
    \end{aligned}
  \end{align}
  This and Lemma \ref{induzierterOp} imply the existence of sequences
  $M_n \to \infty $, $\tau_n \to 0$ and elements
  $\bar u \in L^\infty(0,T;H)\cap L^p(0,T;V)$,
  $\hat u \in L^\infty(0,T;H)$, $u^\ast \in H$,
  $B^\ast \in L^{p'}(0,T;V^\ast)$ such that $\bar u_n:= \bar
  u_{M_n}^{\tau_n}$, $\hat u_n:= \hat u_{M_n}^{\tau_n}$ satisfy
  \begin{alignat}{2}\raisetag{2cm}\label{konv}
    \begin{aligned}
      \bar u_n &\rightharpoonup \bar u& \quad &\text{in} \
      L^{p}(0,T;V),
      \\
      \bar u_n &\overset{\ast}{\rightharpoonup} \bar u &&\text{in} \
      L^{\infty}(0,T;H),
      \\
      {B}\bar u_n &\rightharpoonup B^\ast &&\text{in} \
      L^{p'}(0,T;V^\ast),
      \\
      \hat u_n &\overset{\ast}{\rightharpoonup} \hat u &&\text{in} \
      L^{\infty}(0,T;H),
      \\
      \bar u_n(T)=\hat u_n (T) &\rightharpoonup u^\ast &&\text{in} \
      H.
    \end{aligned}
  \end{alignat}
  We want to apply Proposition \ref{Hirano} to the sequence $(\bar
  u_n)_{n\in \N}$.

  {\it (ii) identification of pointwise limit}: We have to verify that
  $\bar u_n (t) \rightharpoonup \bar u (t) $ in $H$ for almost all
  $t\in (0,T)$.  Let us first show that $\bar u =\hat u $ in
  $L^2(0,T;H)$. Note that linear combinations of functions of the form
  $\chi_{(s_1,s_2)}(t) v$, where $\chi_{(s_1,s_2)}$, $0<s_1<s_2<T$, is
  the characteristic function of the intervall $(s_1,s_2)$ and
  $v \in H$, are dense in $L^2(0,T;H)$. For $0<s_1<s_2<T$ there exist
  $k_1^n, k_2^n \in \{1,\ldots, K_n\}$,
  $\lambda_1^n, \lambda_2^n \in (0,1]$ such that
  $s_i=\tau_n(\lambda_1^n+k_i^n-1) \in I_{k_i^n}^{\tau_n}$,
  $i=1,2$. Using that
  $\hat u_n(t)-\bar u_n(t)= (u_{M_n}^k-u_{M_n}^{k-1})(\frac t{\tau_n}
  -k)$ on $I_{k}^{\tau_n}$ and \eqref{eq:apri1} we easily see that
  \begin{align*}
    (\hat u_n-\bar u_n, \chi_{(s_1,s_2)}v)_{L^2(0,T;H)}&=
    \int_{s_1}^{s_2} (\hat u_n(t)-\bar u_n(t), v)_H\, dt
    \\
   &\le 4 \tau_n \|\bar u_n\|_{L^\infty(0,T;H)} \|v\|_H \to  0 \quad
     \text{for } {n\to \infty}.
  \end{align*}
  Thus $\hat u_n -\bar u_n \rightharpoonup 0$ in $L^2(0,T;H)$, which
  implies $\bar u =\hat u $ in $L^2(0,T;H)$, and thus also in $L^\infty(0,T;H)$.

  Next, notice that \eqref{eq:rg} can for all $v \in V_{M_n}$ and almost
  all $t \in (0,T)$ be re-written as
  \begin{align}
    \label{eq:ga}
    \begin{aligned}
      \Big \langle\frac {d \hat u_{n}  (t)}{dt},v\Big \rangle_V +
      \langle B \bar u_{n} (t) ,v\rangle _V= \langle
      \bar f_{n}(t),v\rangle_V ,
     \end{aligned}
  \end{align}
  where $\bar f_n$ is the piecewise constant interpolant generated by
  $(f_{M_n}(t_k^{\tau_n}))_{k=0,\ldots, K_n}$.  For an arbitrary $s\in (0,T)$
  let $\phi_s\in C^\infty_0(0,T)$ satisfy $0\le \phi_s\le 1$ and  $\phi_s\equiv 1 $ in a
  neighborhood of $s$. Let $k\in\N$ and let $m,n\in \N $ be such that
  $M_n, M_m \ge k$. Multiplying \eqref{eq:ga} for an arbitrary
  $v\in V_k$ by $\phi_s$, integrating over $(0,s)$ with respect to
  $t$, using the integration by parts formula and the properties of
  the Gelfand triple we obtain
  \begin{align}
    &(\hat u_n(s)-\hat u_m(s),v)_H \label{eq:cauchy}\\
    &=\int_0^s(\hat u_n(t)-\hat u_m(t),v)_H\,\phi'_s(t)\,dt
      -\int_0^s\langle B\bar u_n(t)-B\bar
      u_m(t),v\rangle_V\,\phi_s(t)\,dt \notag\\
    &\quad +\int_0^s\langle
      \bar f_n(t)-\bar f_m(t),v\rangle_V\,\phi_s(t)\,dt\,.\notag 
    %  -(u_{0,n}-u_{0,m},v_n^k\phi_s(0))_H 
  \end{align}
  In view of \eqref{konv} and $\bar f_n \to f$ in $L^{p'}(0,T;V^\ast)$
  we see that the right-hand side converges to $0$ for $n,m \to
  \infty$. Since $\bigcup _{k\in \N}V_k$ is dense in $H$, this shows
  that for every $s\in (0,T)$ the sequence $(\hat u_n(s))_{n\in \N}$ is a weak
  Cauchy sequence in $H$. Thus, for every $s \in (0,T)$ there exists
  $w(s) \in H$ such that $\hat u_n(s) \rightharpoonup 
  w(s)$ in $H$. From this, \eqref{eq:apri1} and the Lebesgue theorem
  on dominated convergence follows for all $\phi \in L^2(0,T;H)$
  \begin{align*}
    \lim_{n\to \infty} \int_0^T (\hat u_n(t), \phi (t))_H\, dt
    =\int_0^T ( w(t), \phi (t))_H\, dt .
  \end{align*}
  This together with \eqref{konv}$_4$ implies $w =\hat u $ in
  $L^2(0,T;H)$. Since $\bar u =\hat u $ in $L^2(0,T;H)$ we proved
  for almost every $t \in (0,T)$
  \begin{align}\label{eq:point}
    \hat u_n(t) \rightharpoonup \bar  u(t) \qquad \text{in } H\,.
  \end{align}
  However we need to prove $\bar u_n(t) \rightharpoonup \bar u(t)$ in
  $H$ for almost all $t \in (0,T)$. To this end we proceed as follows:
  For given $m\in \N$ let $n\ge m$ be arbitrary. Then we have, using
  that  $\hat u_n(t)-\bar u_n(t)= d_\tau u_{M_n}^k(t -k\tau_n)$ on $I_{k}^{\tau_n}$
  \begin{align*}
    \|\hat u_n -\bar u_n \|_{L^{p'}(0,T; V_m^\ast)}^{p'} 
    &\le     \|\hat u_n -\bar u_n \|_{L^{p'}(0,T; V_n^\ast)}^{p'}
    \\
    &= \sum_{k=1}^{K_n} \|d_\tau  u^k_{M_n}\|_{V^\ast_n}^{p'}\int_{I^{\tau
      _n}_k} |t -k\tau_n|^{p'}\, dt
    \\
    &= \frac {\tau_n^{p'}}{p'+1}\tau_n \sum_{k=1}^{K_n} \|d_\tau  u^k_{M_n}\|_{V_n^\ast}^{p'}.
  \end{align*}
  The equations \eqref{eq:rg} yield 
  \begin{align*}
    \|d_\tau  u^k_{M_n}\|_{V_n^\ast} \le \|f_{M_n}(t_k^{\tau_n})\|_{V^\ast} +\|B u_{M_n}^k \|_{V^\ast}\,,
  \end{align*}
  and thus
  \begin{align*}
%    \label{eq:strong}
    \|\hat u_n -\bar u_n \|_{L^{p'}(0,T; V_m^\ast)}^{p'}  \le  \frac
    {\tau_n^{p'}}{p'+1} \big ( \|\bar
    f_{n}\|_{L^{p'}(0,T;V^\ast)}^{p'} +\|B \bar u_{n}
    \|_{L^{p'}(0,T;V^\ast)}^{p'}\big) \,,
  \end{align*}
  which converges to $0$ in view of \eqref{konv} and and $\bar f_n \to
  f$ in $L^{p'}(0,T;V^\ast)$. Applying a diagonal procedure we get 
  for all $m\in \N$ and almost all $t \in (0,T)$ that 
  \begin{align*}
    \hat u_n(t) -\bar  u_n(t) \to 0 \qquad \text{in } V_m^\ast\,,
  \end{align*}
  which together with \eqref{eq:point}, the properties of the Gelfand
  triple and the density of $\bigcup _{k\in \N}V_k$ in $H$ yields 
  \begin{align}
    \label{eq:point2}
    \bar u_n(t) \rightharpoonup \bar  u(t) \qquad \text{in } H\,.
  \end{align}

  {\it (iii) verification of condition
    \eqref{Hiranovoraussetzung}$_4$}:
  From \eqref{eq:ga} and the integration by parts formula we obtain for
  all $\phi \in C_0^\infty (\R)$ and all $v\in V_m$, where $M_n \ge m$
  \begin{align*}
    &(\hat u_n(T),v)_H\phi(T)-(\hat u_n(0),v)_H\phi(0)\\
    &=\int_0^T(\hat u_n(t),v)_H\,\phi'(t) 
      -\langle B\bar u_n(t),v\rangle_V\,\phi(t) + \langle
      \bar f_n(t),v\rangle_V\,\phi(t)\,dt\,.
  \end{align*}
  In view of \eqref{konv}, $\bar f_n \to f$ in $L^{p'}(0,T;V^\ast)$
  the density of $\bigcup _{k\in \N}V_k$  in $V$ and $\bar u =\hat u$
  in $L^2(0,T;H)$ we obtain 
  \begin{align*}
    &( u^\ast,v)_H\phi(T)-(u^0,v)_H\phi(0)\\
    &=\int_0^T(\bar u(t),v)_H\,\phi'(t) 
      -\langle B^\ast(t),v\rangle_V\,\phi(t) + \langle
      \bar f(t),v\rangle_V\,\phi(t)\,dt
  \end{align*}
  for all $\phi \in C_0^\infty (\R)$ and all $v\in V$. For $\phi
  \in C_0^\infty (0,T)$ this and the definition of the generalized
  time derivative  imply
  \begin{align}
    \label{eq:ut}
    \frac{d\bar u }{dt} = f - B^\ast \qquad \text{in } L^{p'}(0,T;
    V^\ast). 
  \end{align}
  Moreover, by standard arguments we get $\bar u \in C(\bar I; H)$,
  $u^\ast =\bar u(T)$, and  $\hat u_n(T) =\bar u_n(T) \rightharpoonup
  \bar u(T)$ in $H$. Using \eqref{eq:ga} for $v=\bar u_n(t)$ and 
  \begin{align*}
      \Big \langle \frac {d \hat u_{n}  }{dt},\bar u_n\Big \rangle
    _{L^p(0,T;V)} = \tau _n \sum _{k=1}^{K_n} (d_\tau  u_{M_n}^k,
    u_{M_n}^k)_H \ge \frac 12 \|\bar u _n(T)\|_H^2 -
    \frac 12 \|u _n^0\|_H^2 
  \end{align*}
  we obtain 
  \begin{align*}
      \langle B \bar u_{n}  ,\bar u_n\rangle _{L^p(0,T;V)} &= \langle
      \bar f_{n}, \bar u_n\rangle_{L^p(0,T;V)} - \Big \langle \frac {d \hat u_{n}  }{dt},\bar u_n\Big \rangle
    _{L^p(0,T;V)}  
    \\
    &\le \langle \bar f_{n}, \bar u_n\rangle_{L^p(0,T;V)} + \frac 12
      \|u _n^0\|_H^2 - \frac 12 \|\bar u _n(T)\|_H^2 .
  \end{align*}
  Thus \eqref{konv}, $\bar f_n \to f$ in $L^{p'}(0,T;V^\ast)$ and the
  lower weak semicontinuity of the norm imply
  \begin{align*}%\label{eq:ung}
      \limsup _{n\to \infty} \langle B \bar u_{n}  ,\bar u_n\rangle _{L^p(0,T;V)} 
    &\le \langle f, \bar u\rangle_{L^p(0,T;V)} + \frac 12
      \|u ^0\|_H^2 - \frac 12 \|\bar u (T)\|_H^2.
  \end{align*}
  From \eqref{eq:ut}, the integration by parts formula and
  \eqref{konv} we get
  \begin{align*}
    \langle f, \bar u\rangle_{L^p(0,T;V)} = \frac 12 \|\bar u
    (T)\|_H^2 -\frac 12 \|u ^0\|_H^2 + \lim_{n \to \infty}
    \langle B \bar u_{n}  ,\bar u\rangle _{L^p(0,T;V)} \,. 
  \end{align*}
  The last two inequalities imply that also condition
  \eqref{Hiranovoraussetzung}$_4$ is satisfied.

  Thus, we have verified all conditions in \eqref{Hiranovoraussetzung}
  and consequently Proposition~\ref{Hirano} together with \eqref{konv} implies $B^\ast = B\bar u$
  in $L^{p'}(0,T;V^\ast)$. This and \eqref{eq:ut} yield
  \begin{align*}
    \frac{d\bar u }{dt}  + B\bar u = f  \qquad \text{in } L^{p'}(0,T;
    V^\ast),
  \end{align*}
  i.e.~$\bar u $ is a solution of \eqref{eq:pm}.  
\end{proof}

\section{Convergence of a semi-implicit scheme}

For a given N-function $\varphi$ having $(p,\delta)$-structure we address
the following evolution problem
\begin{align}\label{eq:mp}
  \begin{aligned}
    \frac{du}{dt}(t) -\divo A_0(\nabla u(t))+ g(u(t))&=f &&\text{in }V^\ast \text{ for a.e.} \ t \in [0,T]\\
    u(0)&=u^0 &&\text{in}\ H,
  \end{aligned}
\end{align}
where $A_0$ is given by \eqref{eq:op-a} for $\alpha=0$ and
$g\colon \R \to \R$ is a given function. Concerning the function $g$
we make the following assumption:
\begin{ass}[Nonlinearity]\label{ass:non}
  Let the function $g \colon \R \to \R$ be given by $g(s):=\h(s)\,s$, $s\in \R$, 
  with a continuous function  $\h\colon \R\to \R$ that satisfies:
  \begin{itemize}
  \item[\textnormal{(H1)}] There exists a constant $c_7 >0$ such that
    for all $s\in \R$
    \begin{align*}
      \h(s) \ge -c_7\,.
    \end{align*}
  \item[\textnormal{(H2)}] There exists $r\in (2,\infty)$ and a
    constant $c_8>0$ such that for all $s \in \R$
    \begin{align*}
      \abs{\h(s)} \leq c_8 \big ( 1 + \abs{s} ^{r-2}\big )\,.
    \end{align*}
\end{itemize}
\end{ass}
Note that (H2) implies that there exists a constant $c_9=c_9(r,c_8)>0$ such that
for all $s \in \R$
\begin{align}\label{eq:growth}
  \abs{g(s)} \leq c_9 \big ( 1 + \abs{s} ^{r-1}\big )\,.
\end{align} 
In what follows we abbreviate 
$$
  V:=W^{1,p}_0(\Omega)\quad \text{ and } \quad H:=L^2(\Omega).
$$
The N-function $\vphi$ and the functions $g, \h $ induce operators
$A\colon V\to V^*$,
$G\colon L^q(\Omega) \to L^{\frac q{r-1}}(\Omega)$, $q\in [1,\infty)$,
and $\gH \colon L^q(\Omega) \to L^{\frac q{r-2}}(\Omega)$,
$q\in [\max\{1,r-2\}, \infty)$ via 
\begin{align}
  \label{eq:ops}
  \begin{aligned}
    \langle A u,v\rangle_V &:= \int_\Omega A_0(\nabla u) \cdot \nabla
    v\, \dv{x}\,,
    \\
    (Gu)(x)&:= g(u(x))\,,
    \\
    (\gH u)(x)&:= \h(u(x))\,.
  \end{aligned}
\end{align}
\begin{lemma}\label{lem:op}
  Let $\vphi$ have $(p,\delta)$-structure for some $p \in (1,\infty)$
  and ${\delta\ge 0}$ and let the Assumption~\ref{ass:non} be
  satisfied. Then the operators $A\colon V\to V^*$,
  $\gH \colon L^q(\Omega) \to L^{\frac q{r-2}}(\Omega)$,
  $q\in [\max\{1,r-2\}, \infty)$, and
  $G\colon L^q(\Omega) \to L^{\frac q{r-1}}(\Omega)$,
  $q\in [1,\infty)$ defined in \eqref{eq:ops} are continuous and
  bounded. Moreover, the operator $A$ is strictly monotone and
  coercive. In particular, the operator $B\colon V\to V^*$ defined
  via $Bu:=Au +Gu$ satisfies Assumption~\ref{voroperatorfamilie}
  if $p>\frac{2d}{d+2}$ and $r \in (2,p\frac {d+2}d]$.
\end{lemma}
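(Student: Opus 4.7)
My plan divides the proof along the three claims of the lemma: continuity, boundedness, strict monotonicity and coercivity of $A$; continuity and boundedness of $G$ and $\gH$ as Nemytskii maps; and verification of Assumption~\ref{voroperatorfamilie} for $B=A+G$.

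For the first block, I would start from the $(p,\delta)$-structure~\eqref{eq:ass}, extract the pointwise bound $|A_0(a)|\lesssim |a|^{p-1}+\delta^{p-1}$, and conclude $\|Au\|_{V^*}\lesssim\|u\|_V^{p-1}+1$, giving boundedness. Continuity follows by combining \eqref{eq:op_A_cont-a} at $\alpha=0$ with dominated convergence. Strict monotonicity is immediate from \eqref{eq:op_A_mon-a} at $\alpha=0$, since $\langle Au-Av,u-v\rangle_V\eqsim \int_\Omega \vphi_{|\nabla u|}(|\nabla u-\nabla v|)\dv{x}\ge 0$, with equality forcing $\nabla u=\nabla v$ and hence $u=v$ in $W^{1,p}_0$. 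Coercivity uses $t\vphi'(t)\ge\vphi(t)$ together with the equivalence \eqref{eq:equi} to obtain $\langle Au,u\rangle_V\gtrsim\|u\|_V^p-C\delta^p|\Omega|$. For the Nemytskii operators the argument is standard: continuity of $\h$ and $g$ together with the growth bounds (H2) and \eqref{eq:growth} yield the pointwise estimates $|g(u(x))|^{q/(r-1)}\lesssim 1+|u(x)|^q$ and $|\h(u(x))|^{q/(r-2)}\lesssim 1+|u(x)|^q$, from which boundedness and continuity of $G$ and $\gH$ between the indicated Lebesgue spaces follow from dominated convergence.

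For $B=A+G$ the condition $p>2d/(d+2)$ gives $p(d+2)/d<p^*$ (with $p^*=pd/(d-p)$ when $p<d$ and $p^*=\infty$ otherwise), so the Sobolev embedding $V\hookrightarrow\hookrightarrow L^r$ is compact for every $r\in(2,p(d+2)/d]$. For (A1), $A$ is monotone and continuous, hence pseudo-monotone, while the chain $V\hookrightarrow\hookrightarrow L^r\xrightarrow{G}L^{r/(r-1)}\hookrightarrow V^*$ makes $G\colon V\to V^*$ compact, hence pseudo-monotone; the sum of two pseudo-monotone operators is pseudo-monotone. For (A2), (H1) gives $\int_\Omega \h(u)u^2\,\dv{x}\ge -c_7\|u\|_H^2$, so combined with coercivity of $A$ we obtain $\langle Bu,u\rangle_V\ge c_1\|u\|_V^p-c_7\|u\|_H^2-C$. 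For (A3), $\|Au\|_{V^*}\lesssim\|u\|_V^{p-1}+1$ by Step~1, and for $G$ I would write $\|Gu\|_{V^*}\le \sup_{\|v\|_V\le 1}\|g(u)\|_{L^s}\|v\|_{L^{s'}}$ for a free exponent $s$, chosen so that $s'\le p^*$ (so $\|v\|_{L^{s'}}\lesssim \|v\|_V$) and so that the Lyapunov interpolation $\|u\|_{L^{(r-1)s}}\le\|u\|_{L^2}^\mu\|u\|_{L^{p^*}}^{1-\mu}$ produces $(1-\mu)(r-1)=p-1$. A direct algebraic check shows that the constraint $s\ge (p^*)'$ is compatible with this requirement precisely when $r\le p(d+2)/d$; then $\mu=(r-p)/(r-1)$ and (A3) holds with $q=\mu(r-1)=r-p$.

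The main obstacle is the verification of (A3): the explicit form $\|x\|_H^q\|x\|_V^{p-1}$ in Assumption~\ref{voroperatorfamilie} is not compatible with bounds obtained from a naive choice $s=(p^*)'$, since the resulting interpolation exponent on $\|u\|_V$ exceeds $p-1$. The correct idea is to treat $s$ as a parameter and tune it so that the exponent on $\|u\|_V$ matches $p-1$ exactly; the feasibility of this tuning is exactly what identifies the parabolic Sobolev threshold $r=p(d+2)/d$ as the natural critical exponent in the statement of the lemma.
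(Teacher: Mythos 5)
Your proof is correct and follows essentially the same route as the paper: $(p,\delta)$-structure and Lemma~\ref{la:mon_props_b} give boundedness, continuity, strict monotonicity and coercivity of $A$; the growth conditions (H1), (H2) make $G$, $\gH$ bounded continuous Nemyckii operators; the compact embedding $V\hookrightarrow\hookrightarrow L^r(\Omega)$ for $r<p^*$ yields compactness of $G$ and hence pseudo-monotonicity of $B$; and H\"older's inequality, interpolation between $H$ and $L^{p^*}(\Omega)$, and tuning of the exponent reproduce (A2), (A3) for $G$, identifying $r\le p\frac{d+2}{d}$ as the threshold. (The paper itself is terser, delegating the interpolation argument to a reference, so spelling out the computation as you do is a genuine service.)

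One small caveat: your final assertion that (A3) holds with $q=\mu(r-1)=r-p$ implicitly requires $\mu\ge 0$, i.e.\ $r\ge p$. Lemma~\ref{lem:op} does not impose $p\le 2$, so $r<p$ is possible; in that regime the interpolation exponent $\mu$ is negative and the argument as written breaks down. The fix is trivial (for $r\le p$ one has $\|u\|_V^{r-1}\le 1+\|u\|_V^{p-1}$ and (A3) holds already with $c_5=0$, $c_4,c_6>0$), but it is worth stating, since the lemma's statement does not exclude that case.
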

\begin{proof}
  Since $V\eqsim W^{1,\vphi}_0(\Omega)$ the properties of $A$ follow
  from the properties of $\vphi$ in a standard manner. Thus the
  operator $A$ satisfies Assumption~\ref{voroperatorfamilie} with
  constants $c_1=c_1(p)$, $c_3=c_3(p)\delta^p$,
  $c_6=c_6(p,\abs{\Omega})\delta^{p-1}$ and $c_4=c_4(p)$,
  $c_2=c_5=0$. From Assumption~\ref{ass:non} we deduce that $H, G$ are
  Nemyckii operators, for which the stated properties follow in a
  standard way. Moreover, for ${r\in (2,p^*)}$, recall that 
 $p^*= \frac{dp}{d-p}$ if $p<d$, and $p^*=\infty$ if $p\ge d$, the operator
  $G\colon V \to V^*$ is compact, since the embedding
  $V\hookrightarrow\hookrightarrow L^r(\Omega)$ is compact. Thus we get that the operator $B$ is
  pseudomonotone. For $p>\frac{2d}{d+2}$ we get that $(V,H,V^*)$ forms
  a Gelfand triple and that $p\frac{d+2}d <p^*$. The
  Assumption~\ref{ass:non}, H\"older's inequality,
  interpolation, embeddings  and $r\le p\frac {d+2}d$ (cf.~\cite{br-hirano} for more
  details) imply that $G$ satisfies (A2), (A3) with
  constants $c_2=c_7$, $c_4=c_9$,   $c_1=c_3=c_4=c_6=0$. Consequently,
  $B$ satisfies Assumption~\ref{voroperatorfamilie}. 
\end{proof}

% \comment{remark on $B$ for $p>1$?}

In view of this lemma we can apply Theorem~\ref{maintheorem} to the
present situation if we make analogous assumptions on the data to
Assumption~\ref{ass:data}. The assumption applies to standard finite
element methods on polyhedral Lipschitz domains (cf.~\cite{BreSco08}). 

\begin{ass}[Data I]\label{ass:data2}
  Let $p>\frac {2d}{d+2}$ and let $u^0\in H$ and $f \in L^{p'}(0,T;V^\ast)$ be given. 
  Let $V_h \subset W^{1,\infty}_0(\Omega)$, $h>0$, be conforming finite
  element spaces, corresponding to shape regular triangulations
  $\mathcal T_h$. We equip $V_h$ with the \mbox{$V$-norm} and assume that
  $V_{ h/2 }\subset V_h$ and that $\bigcup _{m\in \N}V_{h2^{-m}}$
  is dense in $V$. We assume that there exists a sequence
  $(u^0_h) \subset V_h$ with $u^0_h \to u^0$ in $H$. For each
  $\eps >0$ we set $\ue ^0_h:=u^0_h$. We further assume that there exists a
  sequence $(f_h)\subset C(0,T;V^*)$ such that $f_h\to f$ in
  $L^{p'}(0,T;V^*)$.
\end{ass}

Let us first  study an implicit scheme.
Let $\eps \in [0,1)$. For given $h>0$ and $\ue^0_h \in V_h$ the
sequence of iterates $(\prescript{\eps}{}u^k_h )_{k=0,\ldots , K}
\subseteq V_h$ is given via 
\begin{align}\label{eq:phi-rgi}
  \begin{aligned}
    &\big( d_\tau \ue^k_h,v_h\big ) +
    \Big(\frac{\vphi_\eps'(|\nabla \ue^{k}_h|)}{|\nabla
      \ue^{k}_h|}\nabla \ue^k_h,\nabla v_h \Big)
    \\
    &\quad + (\h(\ue^{k}_h)\, \ue^k_h,v_h)= (f_h(t_k),v_h) \qquad
    \forall v_h \in V_h.
  \end{aligned}
\end{align}
%From Theorem~\ref{maintheorem} we deduce:
%\comment{we dont need $\eps>0$. we could write the scheme and the
%  theorem for $\eps =0$.}
\begin{theorem}[Convergence of the implicit scheme]\label{maintheorem0}
  Let $\vphi$ have $(p,\delta)$-structure for some
  $p \in (\frac{2d}{d+2},\infty)$ and ${\delta\ge 0}$, let
  Assumption~\ref{ass:non} be satisfied for some
  $r \in (2,p\frac {d+2}d]$ and let Assumption~\ref{ass:data2} be
  satisfied. Let
  $\bar u_n:=\prescript{\eps _n}{}{\bar u}_{h_n}^{\tau_n}$ be a
  sequence of piecewise constant interpolants generated by iterates
  $(\prescript{\eps _n}{}u^k_{h_n})_{k=0,\ldots, K_n}$,
  $K_n=\frac T{\tau_n}$, solving \eqref{eq:phi-rgi} for some sequences
  $h_n\to 0$, $\tau_n\to 0$, $\eps _n \to 0$. Then each weak$^\ast $
  accumulation point $u$ of the sequence $(\bar u_n)_{n\in \N}$ in the
  space $L^\infty(0,T;H)\cap L^p(0,T;V)$ belongs to the space
  $ W_p^1(0,T;V,H)$ and is a solution of \eqref{eq:mp}.
\end{theorem}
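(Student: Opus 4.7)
The plan is to follow the blueprint of Theorem~\ref{maintheorem}, treating \eqref{eq:phi-rgi} as the implicit Galerkin scheme for the limit operator $Bu := -\divo A_0(\nabla u) + g(u)$ perturbed by a vanishing discrepancy. By Lemma~\ref{lem:op} this operator $B$ satisfies Assumption~\ref{voroperatorfamilie}, and Assumption~\ref{ass:data2} provides the Galerkin structure required by Assumption~\ref{ass:data} via $V_M := V_{h_0 2^{-M}}$. The only essentially new ingredient is Lemma~\ref{lem:eps}, which furnishes the uniform pointwise bound
\[
\bigabs{A_{\eps_n}(\nabla \ue^k_{h_n}) - A_0(\nabla \ue^k_{h_n})} \le (1-\kappa_0)\vphi'(\eps_n) \to 0\,,
\]
so that the discrepancy between \eqref{eq:phi-rgi} and a genuine implicit scheme for $B$ tends to zero in $L^\infty(\Omega \times (0,T))^d$.

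Steps (i) and (ii) of the proof of Theorem~\ref{maintheorem} would then transfer almost verbatim. Testing \eqref{eq:phi-rgi} with $v_h = \ue^k_{h_n}$ and using $\vphi'_{\eps_n}(r)\,r \eqsim \vphi_{\eps_n}(r) \gtrsim r^p - \eps_n^p - \delta^p$ via \eqref{eq:equi} together with the one-sided bound (H1), a discrete Gronwall argument yields a priori estimates on $\bar u_n$, $\hat u_n$, $A_{\eps_n}(\nabla \bar u_n)$ and $G\bar u_n$ uniformly in $n$ and $\eps_n \in [0,1)$. Extracting subsequences I obtain weak/weak$^\ast$ limits $u$, $a^\ast$, $g^\ast$ and repeat the Cauchy-in-$H$ computation from \eqref{eq:cauchy} to identify $\hat u = u$ and $\bar u_n(t) \rightharpoonup u(t)$ in $H$ for a.e.\ $t$. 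Since $r \le p\frac{d+2}{d} < p^\ast$ under the hypothesis $p > \frac{2d}{d+2}$, Proposition~\ref{pro:comp} upgrades this to strong convergence $\bar u_n \to u$ in $L^p(0,T; L^r(\Omega))$, which yields $G\bar u_n \to Gu$ in the appropriate Bochner space and $g^\ast = Gu$.

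For step (iii), testing with $v_h = \ue^k_{h_n}$ and invoking $\tau_n \sum_k (d_\tau u^k_{h_n}, u^k_{h_n})_H \ge \tfrac12 \|\bar u_n(T)\|_H^2 - \tfrac12 \|u^0_{h_n}\|_H^2$, the discrepancy bound
\[
\Bigabs{\int_0^T\!\!\int_\Omega \big(A_{\eps_n}(\nabla \bar u_n)-A_0(\nabla \bar u_n)\big)\cdot\nabla \bar u_n \dv{x}\dv{t}} \le (1-\kappa_0)\vphi'(\eps_n)\|\nabla \bar u_n\|_{L^1(\Omega\times(0,T))}
\]
vanishes thanks to Lemma~\ref{lem:eps}, so $A_0(\nabla \bar u_n)$ and $A_{\eps_n}(\nabla \bar u_n)$ are interchangeable both in the limsup calculation and in the weak limit (giving $A_0(\nabla \bar u_n) \rightharpoonup a^\ast$ in $L^{p'}(0,T;L^{p'}(\Omega)^d)$). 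Combined with strong convergence of $G\bar u_n$, this produces $\limsup_n \langle B\bar u_n, \bar u_n - u\rangle_{L^p(0,T;V)} \le 0$, and Proposition~\ref{Hirano} yields $B\bar u_n \rightharpoonup Bu$, forcing $a^\ast = A_0(\nabla u)$ a.e. The limit equation coincides with \eqref{eq:mp}, and $u(0)=u^0$ follows from the standard Gelfand-triple integration by parts.

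The principal obstacle lies in coordinating the three simultaneous limits $h_n, \tau_n, \eps_n \to 0$ without imposing any quantitative relation between them: the a priori bounds must be genuinely uniform in $\eps_n \in [0,1)$, and the Hirano-Landes argument of Proposition~\ref{Hirano} must be invoked for the $\eps$-independent operator $A+G$ while the scheme only provides information about $A_{\eps_n}$. Lemma~\ref{lem:eps} is precisely the device that closes this gap, converting the regularization into an additive perturbation vanishing in $L^\infty$, so that the abstract framework of Theorem~\ref{maintheorem} can be invoked after passing to this perturbed form of the equation.
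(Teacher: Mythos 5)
Your proposal takes essentially the same route as the paper: rewrite \eqref{eq:phi-rgi} as the unregularized implicit scheme \eqref{eq:phi-rgi-new} perturbed by the discrepancy term $\Ee_h^k = A_0(\nabla\ue_h^k) - A_\eps(\nabla\ue_h^k)$, use Lemma~\ref{lem:eps} to get the uniform pointwise bound $|\Ee_h^k| \le (1-\kappa_0)\vphi'(\eps_n) \to 0$, and then run the Hirano--Landes machinery of Theorem~\ref{maintheorem}. The paper handles $\eps=0$ by direct citation of Theorem~\ref{maintheorem} and for $\eps>0$ defers the treatment of $\Ee_h^k$ to the proof of Theorem~\ref{maintheorem1}; your proposal fills in precisely those deferred details.

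The one point where you do more than the implicit case requires is the invocation of Proposition~\ref{pro:comp} to obtain strong convergence $\bar u_n \to u$ in $L^p(0,T;L^r(\Omega))$ and hence $G\bar u_n \to Gu$. Because the nonlinearity in \eqref{eq:phi-rgi} is evaluated implicitly at $\ue^k_h$, the operator $Bu = Au + Gu$ is genuinely evaluated at $\bar u_n$, so it suffices to apply Proposition~\ref{Hirano} to the full pseudo-monotone $B$ (as the paper does in Theorem~\ref{maintheorem}); this yields $B^\ast = B\bar u$ in one stroke without splitting $A$ from $G$. The compactness step is exactly what becomes essential in Theorem~\ref{maintheorem1}, where $\h$ is lagged and appears as $\gH(\tilde u_n)\bar u_n$ rather than as $G\bar u_n$, so you are anticipating the semi-implicit argument; it is harmless here, merely redundant. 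Everything else — the uniform coercivity via \eqref{eq:equi} absorbing the $\eps_n^p+\delta^p$ contributions, the Cauchy-in-$H$ argument for pointwise identification, and the control of $\int_0^T(E_n,\nabla\bar u_n)\dv{t}$ by $\vphi'(\eps_n)\|\nabla\bar u_n\|_{L^1(\O\times(0,T))}$ — is in line with the paper's intent.
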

\begin{proof}
  In the case $\eps =0 $ we choose $\eps_n =0$ and the statement of
  the theorem follows from Theorem~\ref{maintheorem}. In the case
  $\eps>0$ we have to re-write the scheme \eqref{eq:phi-rgi} as
  \begin{align}\label{eq:phi-rgi-new}
    \begin{aligned}
      &\langle d_\tau \ue^k_h,v_h\rangle _V + \big( A_0(\nabla
      \ue_h^k), \nabla v_h \big ) + (\h(\ue^{k}_h)\, \ue^k_h,v_h)
      \\
      &= (f_h(t_k),v_h) + \big (\Ee_h^k, \nabla v_h \big),
    \end{aligned}
  \end{align}
  where
  \begin {align*}
     \big (\Ee_h^k, \nabla v_h  \big)&:= \big(A_0(\nabla \ue_h^k) -
     A_\eps (\nabla \ue_h^k), \nabla v_h \big )\,.
  \end{align*}
  The proof of the assertion now follows along the lines of the proof
  of Theorem~\ref{maintheorem}. The additional term $\Ee_h^k$ can be
  treated due to Lemma~\ref{lem:eps}. We omit the details here, since
  they will be discussed in detail in the proof of
  Theorem~\ref{maintheorem1}, where the same term occurs.  
\end{proof}

In the scheme \eqref{eq:phi-rgi} we still have to solve nonlinear
equations. If we want to avoid this and only solve linear equations we
can study the following semi-implicit scheme:
Let $\eps \in (0,1)$. For given $h>0$ and $\ue^0_h \in V_h$ the
sequence of iterates $(\prescript{\eps}{}u^k_h )_{k=0,\ldots , K}
\subseteq V_h$ is given via 
\begin{align}\label{eq:phi-rg}
  \begin{aligned}
    &\big ( d_\tau \ue^k_h,v_h\big )+ 
    \Big(\frac{\vphi_\eps'(|\nabla \ue^{k-1}_h|)}{|\nabla
      \ue^{k-1}_h|}\nabla \ue^k_h,\nabla v_h \Big)
    \\
    &\quad + (\h(\ue^{k-1}_h)\, \ue^k_h,v_h)= (f_h(t_k),v_h) \qquad
    \forall v_h \in V_h.
  \end{aligned}
\end{align}
To show that also this scheme converges to  a weak
solution of \eqref{eq:mp} we have to make more restrictive assumptions on the data.

\begin{ass}[Data II]\label{ass:data1}
  Let $p>\frac {2d}{d+2}$ and let $u^0\in V$ and $f \in L^{p'}(0,T;H)$
  be given.  Let $V_h \subset W^{1,\infty}_0(\Omega)$, $h>0$, be
  conforming finite element spaces, corresponding to shape regular
  triangulations $\mathcal T_h$. We equip $V_h$ with the
  \mbox{$V$-norm} and assume that $V_{ h/2 }\subset V_h$ and that
  $\bigcup _{m\in \N}V_{h2^{-m}}$ is dense in $V$. We assume that
  there exists a sequence $(u^0_h) \subset V_h$ with $u^0_h \to u^0$
  in $V$. For each $\eps >0$ we set $\ue ^0_h:=u^0_h$. We assume that
  there exists a sequence $(f_h)\subset C(0,T;H)$ such that $f_h\to f$
  in $L^{p'}(0,T;H)$.
\end{ass}

The following theorem excludes the special case $p=2$ which is discussed
in a subsequent remark. 

\begin{theorem}[Convergence of the semi-implicit scheme]\label{maintheorem1}
  Let $\varphi$ have $(p,\delta)$-structure for some $p \in
  (\frac{2d}{d+2},2)$ % \footnote{The case $p=2$ will be discussed in a remark after the proof of that theorem.} 
  and   $\delta\ge 0$, let 
  Assumption~\ref{ass:non} be satisfied for some
  $r \in (2,p\frac {d+2}{2d}+1]$ and let Assumptions~\ref{ass:data2} be
  satisfied. Let
  $\bar u_n:=\prescript{\eps _n}{}{\bar u}_{h_n}^{\tau_n}$ be a
  sequence of piecewise constant interpolants generated by iterates
  $(\prescript{\eps _n}{}u^k_{h_n})_{k=0,\ldots, K_n}$,
  $K_n=\frac T{\tau_n}$, solving \eqref{eq:phi-rg} for some sequences
  $h_n\to 0$, $\tau_n\to 0$, $\eps _n \to 0$ satisfying
  $\tau_n=o(\varphi''(\eps_n))$.  Then each weak$^\ast $ accumulation point $u$ of the sequence
  $(\bar u_n)_{n\in \N}$ in the space $L^\infty(0,T;V)$
  belongs to the space $ W_p^1(0,T;V,H) \cap L^\infty (0,T;V)$ and is a solution of
  \eqref{eq:mp}.
\end{theorem}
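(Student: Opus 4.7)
The plan is to imitate the proof of Theorem~\ref{maintheorem0} by rewriting the linear system \eqref{eq:phi-rg} as an implicit, unregularized scheme perturbed by explicit discrepancy terms and then passing to the limit via Proposition~\ref{Hirano}. Concretely, I decompose
\begin{equation*}
\frac{\vphi_\eps'(|\nabla \ue^{k-1}_h|)}{|\nabla \ue^{k-1}_h|}\nabla \ue^k_h = A_0(\nabla \ue^k_h) - \Ee^k_h - \Fe^k_h, \qquad \h(\ue^{k-1}_h)\ue^k_h = g(\ue^k_h) - \cG^k_h,
\end{equation*}
with $\Ee^k_h = A_0(\nabla \ue^k_h)-A_\eps(\nabla \ue^k_h)$ (bounded via Lemma~\ref{lem:eps}), $\Fe^k_h = A_\eps(\nabla \ue^k_h) - \frac{\vphi_\eps'(|\nabla \ue^{k-1}_h|)}{|\nabla \ue^{k-1}_h|}\nabla \ue^k_h$ (bounded via Lemma~\ref{lem:est}), and $\cG^k_h = (\h(\ue^k_h)-\h(\ue^{k-1}_h))\ue^k_h$.

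\textbf{Step 1: a priori estimates.} Testing with $\ue^k_h$ yields the $L^\infty(0,T;H)\cap L^p(0,T;V)$ bound for $\bar u_n$ as in the proof of Theorem~\ref{maintheorem}. Testing \emph{additionally} with $d_\tau \ue^k_h$ and applying Lemma~\ref{la:orlicz_stab} with $a=\nabla \ue^{k-1}_h$, $b=\nabla \ue^k_h$ telescopes the $\vphi_\eps$-energy and produces
\begin{equation*}
\sup_k E_{\vphi_\eps}[\ue^k_h] + \tau\sum_k \|d_\tau \ue^k_h\|_H^2 + \tau^2\sum_k \int_\Omega \frac{\vphi_\eps'(|\nabla \ue^{k-1}_h|)}{|\nabla \ue^{k-1}_h|}|\nabla d_\tau \ue^k_h|^2\,dx \le C
\end{equation*}
with $C$ independent of $h,\tau,\eps$; the zeroth-order contribution $(\h(\ue^{k-1}_h)\ue^k_h,d_\tau \ue^k_h)$ is split via $u^k=u^{k-1}+\tau d_\tau u^k$ into an absorbable quadratic piece and a remainder bounded by Young and Assumption~\ref{ass:non}. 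This is precisely the testing that requires $u^0\in V$ in Assumption~\ref{ass:data1}, and via \eqref{eq:equi} it upgrades the $V$-bound on $\bar u_n$ to $L^\infty(0,T;V)$.

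\textbf{Step 2: discrepancy control.} Lemma~\ref{lem:eps} gives $|\overline{\Ee}_n|\le (1-\kappa_0)\vphi'(\eps_n)\to 0$ uniformly, and Lemma~\ref{lem:est} gives $|\Fe^k_h|\lesssim \tau\,\frac{\vphi_\eps'(|\nabla \ue^{k-1}_h|)}{|\nabla \ue^{k-1}_h|}\,|\nabla d_\tau \ue^k_h|$. Young's inequality with parameter $\alpha_\eps>0$, combined with $\vphi_\eps'(s)/s \lesssim \vphi'(\eps)/\eps\sim\vphi''(\eps)$ from (C2) and the $(p,\delta)$-structure applied to the test-function factor, gives, for $v$ in a dense subspace of $L^p(0,T;V)$,
\begin{align*}
\int_0^T(\overline{\Fe}_n,\nabla v)\,dt &\le \alpha_\eps \tau^2\sum_k \int_\Omega \frac{\vphi_\eps'(|\nabla \ue^{k-1}_h|)}{|\nabla \ue^{k-1}_h|}|\nabla d_\tau \ue^k_h|^2\,dx \\
&\quad + \frac{1}{\alpha_\eps\vphi''(\eps)}\int_0^T\|\nabla v\|^2_{L^2(\Omega)}\,dt.
\end{align*}
The choice $\alpha_\eps=(\tau/\vphi''(\eps))^{1/2}$ together with $\tau_n=o(\vphi''(\eps_n))$ drives the right-hand side to zero. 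For $\cG^k_h$ I invoke the $L^\infty(0,T;V)$ bound and Proposition~\ref{pro:comp} to extract strong $L^s(0,T;L^r(\Omega))$ convergence of $\bar u_n$ together with its one-step shift, whence continuity of $\h$ and dominated convergence make $\overline{\cG}_n\to 0$ in the appropriate duality.

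\textbf{Step 3 and main obstacle.} I extract a subsequence enjoying the analogue of \eqref{konv}, identify the pointwise $H$-limits as in the proof of Theorem~\ref{maintheorem} (the Cauchy argument \eqref{eq:cauchy} now carries the additional discrepancies, which vanish by Step~2 tested against finite-element functions), and verify \eqref{Hiranovoraussetzung}$_4$ via the discrete by-parts bound on $\langle d\hat u_n/dt,\bar u_n\rangle$, with the discrepancy contributions against $\bar u_n-u$ handled by approximating in the dense class and controlling the residual via the uniform bounds of Step~1. Proposition~\ref{Hirano} then identifies $B^*=B\bar u$ and gives \eqref{eq:mp}. The main obstacle is the $\Fe$-term: the weighted sum in the energy estimate is only bounded (not small), so the prefactor $\tau^2\alpha_\eps$ must carry the entire smallness, leaving the factor $\tau/\vphi''(\eps)$ to be absorbed by the hypothesis $\tau_n=o(\vphi''(\eps_n))$. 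This is precisely where the restriction $p<2$ enters: only then does $\vphi''(\eps)\to\infty$ as $\eps\to 0$, rendering $\tau_n=o(\vphi''(\eps_n))$ a genuine balancing condition compatible with both parameters tending to zero.
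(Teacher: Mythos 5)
Your decomposition and overall strategy — rewrite~\eqref{eq:phi-rg} as a perturbed implicit scheme, derive the two a~priori estimates by testing with $\ue^k_h$ and $d_\tau \ue^k_h$, control $\Ee^k_h$ via Lemma~\ref{lem:eps} and $\Fe^k_h$ via Lemma~\ref{lem:est} plus Young, use Proposition~\ref{pro:comp} for the lower-order term, and close via Proposition~\ref{Hirano} — match the paper's proof step for step. (Extracting the lower-order discrepancy $\cG^k_h$ explicitly rather than carrying $\gH(\tilde u_n)\bar u_n$ along as the paper does is a cosmetic difference: both hinge on the same strong compactness.)

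There is, however, a concrete computational slip in Step~2 that flips the key parameter relation. You correctly cite $\vphi'_\eps(s)/s \lesssim \vphi''(\eps)$, but after Young's inequality that factor must \emph{multiply}, not divide, the coefficient of the $\|\nabla v\|^2$ term. Carrying the $\tau$ from the time integration, the correct bound reads
\[
\int_0^T(\overline{\Fe}_n,\nabla v)\,dt
\;\lesssim\;
\alpha_\eps\,\tau^2\sum_k\int_\Omega\frac{\vphi_\eps'(|\nabla \ue^{k-1}_h|)}{|\nabla \ue^{k-1}_h|}\,|\nabla d_\tau \ue^k_h|^2\dv{x}
\;+\;
\frac{\tau\,\vphi''(\eps)}{\alpha_\eps}\int_0^T\|\nabla v\|^2_{L^2(\Omega)}\,dt,
\]
not with $\big(\alpha_\eps\vphi''(\eps)\big)^{-1}$ as you wrote. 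The optimal balance is then $\alpha_\eps=\big(\tau\,\vphi''(\eps)\big)^{1/2}$ (the paper's $\gamma(\eps_n)$), and both summands are $\eqsim\big(\tau\,\vphi''(\eps)\big)^{1/2}$, so smallness is obtained precisely when $\tau_n\,\vphi''(\eps_n)\to 0$, i.e.\ $\tau_n=o\big(\vphi''(\eps_n)^{-1}\big)$. This is also the relation the paper's proof actually invokes in \eqref{eq:fe}, and it agrees with the introduction's $\tau=o(\eps^{2-p})$ in the $(p,0)$-structure case, since there $\vphi''(\eps)\eqsim\eps^{p-2}$. (The theorem statement's ``$\tau_n=o(\varphi''(\eps_n))$'' is evidently a typo for the reciprocal.) With your bound as written and your choice $\alpha_\eps=(\tau/\vphi''(\eps))^{1/2}$, the second term becomes $\big(\tau\,\vphi''(\eps)\big)^{-1/2}$, which tends to zero only if $\tau\,\vphi''(\eps)\to\infty$ — the opposite of what the energy estimate can absorb.

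The closing remark about $p<2$ inherits this confusion: if $\vphi''(\eps)\to\infty$, then $\tau_n=o(\vphi''(\eps_n))$ is vacuous (both factors make the quotient vanish), not a balancing condition. The genuine balancing condition is $\tau_n=o(\vphi''(\eps_n)^{-1})$, which for $p<2$ and $\delta$ small constrains how fast $\tau_n$ must vanish relative to $\eps_n^{2-p}$. The restriction $p<2$ is primarily needed for (C2), i.e.\ the monotonicity of $r\mapsto\vphi'(r)/r$ underpinning Lemma~\ref{la:orlicz_stab} and hence the unconditional energy stability of the semi-implicit scheme, rather than to make a coefficient blow up.
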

\begin{proof}
  In order to adapt the arguments of the proof of
  Theorem~\ref{maintheorem} to the present situation we re-write
  \eqref{eq:phi-rg} as an implicit scheme with resulting error
  terms on the right-hand side. The handling of these new terms in the
  verification of the conditions in \eqref{Hiranovoraussetzung} is
  possible due to a second a~priori estimate, obtained by testing with
  the backward difference quotient of the solution. For the
  verification of the last condition in \eqref{Hiranovoraussetzung} we
  also use the compactness argument in Proposition~\ref{pro:comp}.

  {\it (i) existence of iterates and a~priori estimates}: For each
  $h>0$, $\eps \in (0,\eps_0)$, where we assume without loss of generality that $\veps_0=1$,   % $\footnote{Any restriction $\eps \in (0,\eps_0)$ with $ \eps_0<\infty$ will do as well.} 
   and each $\tau =\frac TK$, $K \in \N$, the existence of iterates
  $(\ue^k_h)_{k=0,\ldots, K} \subseteq V_h$ solving \eqref{eq:phi-rg}
  is clear since these are linear equations. Using $v_h=\ue_h^k$ in
  \eqref{eq:phi-rg} we obtain, also using the Assumption~\ref{ass:non}
  and Young's inequality, the estimate
  \begin{align}
    \label{eq:apriori}
    \begin{aligned}
      &\frac 12 \|\ue^\ell_h\|_H^2 + \tau \sum_{k=1}^\ell \int_\Omega
      \frac{\vphi_\eps'(|\nabla \ue^{k-1}_h|)}{|\nabla
        \ue^{k-1}_h|}|\nabla \ue^k_h|^2 \, \dv{x}
      \\
      &\quad \le \frac 12 \|u^0_h\|_H^2  +(c_7+1) \, \tau \sum_{k=1}^\ell
      \|u^k_M\|_{H}^{2} + \tau
      \sum_{k=1}^\ell \|f_h(t_k)\|_{H}^{2}
    \end{aligned}
  \end{align}
  valid for all $\ell=1,\ldots, K$. Due to Assumption \ref{ass:data1} the
  first and last term on the right-hand side of \eqref{eq:apriori} are
  uniformly bounded with respect to $h >0$, $\tau, \eps \in(0, 1)$ and
  $\ell\in\{1,\ldots,K\}$. Thus discrete Gronwall's inequality yields
  that the left-hand side of \eqref{eq:apriori} is uniformly bounded
  with respect to $h>0$, $\tau, \eps \in(0, 1)$ and
  $\ell\in\{1,\ldots,K\}$. In particular we get that 
  interpolants generated by $(\ue^k_h)_{k=0,\ldots, K}$ satisfy for
  all $h>0$, $\tau, \eps \in (0, 1)$
  \begin{align}
    \label{eq:apri10}
    \begin{aligned}
      \|\bue_h^\tau\|_{L^\infty(0,T;H)}  + \|\hue_h^\tau\|_{L^\infty(0,T;H)}
      &\le c(\|u^0\|_H , \|f\|_{L^2(0,T;H)})\,. 
    \end{aligned}
  \end{align}
  Using $v_h =d_\tau \ue_h^k $ and Lemma~\ref{la:orlicz_stab} we
  obtain in the same way as in~\cite{BDN}, using also Young's
  inequality,
  \begin{align}
    \label{eq:ener_bound1}
    \begin{aligned}
      &E_{\vphi_\eps}[\ue^\ell_h] + \frac \tau 2 \sum_{k=1}^\ell \|d_\tau
      \ue^k_h\|^2_H + \frac{\tau^2}{2} \sum_{k=1}^\ell \int_\O
      \frac{\vphi_\eps'(|\nabla \ue^{k-1}_h|)}{|\nabla \ue^{k-1}_h|}
      |d_\tau \nabla \ue^k_h|^2 \dv{x}
      \\
      &\quad \le E_{\vphi_\eps}[u_h^0] +  \tau  \sum_{k=1}^\ell
      \|f_h(t_k)\|^2_H + \tau \sum_{k=1}^\ell \int_\Omega
      \abs{\h(\ue^{k-1}_h)}^2\, \abs{\ue^k_h}^2 \,\dv{x}\,,
    \end{aligned}
  \end{align}
  valid for all $\ell=1,\ldots, K$. Due to Assumption \ref{ass:data1} the
  first two terms on the right-hand side are uniformly bounded with
  respect to $h >0$ and $\eps,\tau  \in (0,1)$. Moreover, using
  \eqref{eq:equi} we get
  \begin{align}
    \label{eq:lower}
    E_{\vphi_\eps }[v] \ge c\,\big ( \norm{v}_V^p - \eps^p -\delta^p \big )\,.
  \end{align}
  The Assumption (H2), Young's inequality, the interpolation of
  $L^{2(r-1)}(\Omega)$ between $H$ and $V$ and \eqref{eq:apri10}
  yield
  \begin{align}\label{eq:gron}
    \begin{aligned}
      \int_\Omega\abs{\h(\ue^{k-1}_h)}^2\, \abs{\ue^k_h}^2 \,\dv{x}
      &\le c\Big ( \|\ue^k_h\|_H^2 + \|\ue^k_h\|_{2(r-1)}^{2(r-1)}+
      \|\ue^{k-1}_h\|_{2(r-1)}^{2(r-1)}\Big )
      \\
      &\le c \big ( 1+ \|\ue^k_h\|_{V}^{p \frac{2d(r-2)}{p(d+2)-2d}}+
      \|\ue^{k-1}_h\|_{V}^{p \frac{2d(r-2)}{p(d+2)-2d}}\big)\,.
    \end{aligned}
  \end{align}
  Requiring that ${p \frac{2d(r-2)}{p(d+2)-2d}}\le 1$ we get the
  restriction $r\le p \frac{d+2}{2d} +1$. The last estimate together
  with \eqref{eq:lower}, \eqref{eq:apri10}, \eqref{eq:ener_bound1} and  discrete
  Gronwall's inequality yield  that the interpolants generated by
  $(\ue^k_h)_{k=0,\ldots, K}$ and the piecewise constant interpolant generated by
  $(\prescript{\eps}{} u_{h}^{k-1})_{k=0,\ldots, K}$, which we denote
  by $\prescript{\eps}{} {\tilde u}_{h}^{\tau}$, satisfy for
  all $h>0$, $\tau, \eps \in (0, 1)$
  \begin{align}
    \label{eq:apri2}
    \begin{aligned}
      \|\prescript{\eps}{} {\tilde u}_{h}^{\tau}\|_{L^\infty(0,T;V)}+
      \|\bue_h^\tau\|_{L^\infty(0,T;V)}
      &\le c(\delta,p,T, \abs{\Omega},\|f\|_{L^2(0,T;H)} ,\|u^0\|_V)\,, 
      \\
      \Big\|\frac{d \hue_h^\tau}{dt}\Big \|_{L^2(0,T;H)}+
      \|\hue_h^\tau\|_{L^\infty(0,T;V)}
      &\le c(\delta,p,T, \abs{\Omega},\|f\|_{L^2(0,T;H)} ,\|u^0\|_V)\,.
%      \|\hue_h^\tau\|_{L^\infty(0,T;H)} &\le c(\delta,p,T, \abs{\Omega},\|u^0\|_H ,\|u^0\|_V) \,.
    \end{aligned}
  \end{align}

  Using Assumptions (A3) and (H2) one can show
  (cf.~Lemma~\ref{induzierterOp}) that the induced operators
  $A,B,\gH,G$ are bounded operators in the following settings:
  $A\colon L^\infty(0,T;V) \to L^\infty(0,T;V^*)$,
  $B\colon L^\infty(0,T;V) \to L^\infty(0,T;V^*)$,
  $G\colon L^\infty(0,T;V) \to L^\infty(0,T;
  L^{\frac{p^*}{r-1}}(\Omega))$,
  $\gH\colon L^\infty(0,T;V) \to L^\infty(0,T;
  L^{\frac{p^*}{r-2}}(\Omega) )$.  For later purposes we now choose
  $\tau=o(\vphi''(\eps))$.  Thus \eqref{eq:apri2} and the last
  observation imply the existence of sequences ${h_n \to 0 }$,
  $\tau_n \to 0$, $\eps _n \to 0$ and elements $u^\ast \in H$,
  $\bar u \in L^\infty(0,T;V)$, $\hat u \in L^\infty(0,T;V)$,
  ${\tilde u \in L^\infty(0,T;V)}$,
  $A^\ast \in L^\infty(0,T;V^\ast)$,
%  $G^\ast \in L^\infty(0,T; L^{\frac{p^*}{r-1}}(\Omega) )$,
  ${\gH^\ast \in L^\infty(0,T; L^{\frac{p^*}{r-1}}(\Omega))}$ such
  that $\bar u_n:= \prescript{\eps _n\hspace{-.25mm}}{}{\bar u}_{h_n}^{\tau_n}$,
  $\hat u_n:= \prescript{\eps _n\hspace{-.25mm}}{}{\hat u}_{h_n}^{\tau_n}$,
  $\tilde u_n:= \prescript{\eps_n\hspace{-.25mm}}{} {\tilde u}_{h_n}^{\tau_n}$
  satisfy 
  \begin{alignat}{2}\raisetag{2cm}\label{konv1}
    \begin{aligned}
      \bar u_n &\overset{\ast}{\rightharpoonup} \bar u &&\text{in} \
      L^{\infty}(0,T;V),
      \\
      \hat u_n &\overset{\ast}{\rightharpoonup} \hat u &&\text{in} \
      L^{\infty}(0,T;V),
      \\
      \tilde u_n &\overset{\ast}{\rightharpoonup} \tilde u &&\text{in} \
      L^{\infty}(0,T;V),
      \\
      {A}\bar u_n &\overset{\ast}{\rightharpoonup} A^\ast &&\text{in} \
     L^{\infty}(0,T;V^\ast),
      \\
      % {B}\bar u_n &\overset{\ast}{\rightharpoonup} B^\ast &&\text{in} \
      % L^{\infty}(0,T;V^\ast),
      % \\
      % {G}\bar u_n &\overset{\ast}{\rightharpoonup} G^\ast &&\text{in} \
      % L^{\infty}(0,T; L^{\frac{p^*}{r-1}}(\Omega)),
      % \\
      {\gH}(\tilde u_n)\bar u_n &\overset{\ast}{\rightharpoonup} \gH^\ast &&\text{in} \
      L^{\infty}(0,T; L^{\frac{p^*}{r-1}}(\Omega)) \cap L^{\infty}(0,T;V^\ast),
      \\
      \bar u_n(T)=\hat u_n (T) &\rightharpoonup u^\ast &&\text{in} \
      H.
    \end{aligned}
  \end{alignat}
  We want to apply Proposition \ref{Hirano} to the sequence $(\bar
  u_n)_{n\in \N}$ and the operator $B\colon V\to V^*$ defined via $Bv
  := Av+\gH (v)v$ (cf.~Lemma~\ref{lem:op}).

  {\it (ii) perturbed implicite scheme}: To adapt the arguments from the proof of Theorem~\ref{maintheorem}
  to the present situation, we re-write the scheme \eqref{eq:phi-rg}
  for all $v_h\in V_h$ as a perturbed implicite scheme 
  \begin{align}\label{eq:phi-rg-new}
    \begin{aligned}
      &\langle d_\tau \ue^k_h,v_h\rangle _V + \big( A_0(\nabla
      \ue_h^k), \nabla v_h \big ) + (\h(\ue^{k-1}_h)\, \ue^k_h,v_h)
      \\
      &= (f_h(t_k),v_h) + \big (\Ee_h^k, \nabla v_h \big) + \big
      (\Fe_h^k, \nabla v_h \big),
    \end{aligned}
  \end{align}
  where
  \begin {align*}
     \big (\Ee_h^k, \nabla v_h  \big)&:= \big(A_0(\nabla \ue_h^k) -
     A_\eps (\nabla \ue_h^k), \nabla v_h \big )\,,
    \\
     \big (\Fe_h^k, \nabla v_h  \big) &:= \Big (A_\eps (\nabla \ue_h^k)-
    \frac{\vphi_\eps'(|\nabla \ue^{k-1}_h|)}{|\nabla \ue^{k-1}_h|}\nabla
    \ue^k_h,\nabla v_h \Big)\,.
  \end{align*}
  To verify the conditions \eqref{Hiranovoraussetzung} we proceed as
  in the proof of Theorem~\ref{maintheorem}. In the following we
  concentrate on the treatment of the new terms.
  % We
  % want to use Proposition \ref{Hirano} for the sequence $\bar
  % u_n$. Thus, we have to verify that
  % ${\bar u_n (t) \rightharpoonup \bar u (t)}$ in $H$ for almost all
  % $t\in (0,T)$.

  {\it (iii) identification of the pointwise limit}: In view of \eqref{eq:apri2} we can prove in the same way as in the
  proof of Theorem \ref{maintheorem} that $\bar u =\hat u $ in
  $L^2(0,T;H)$, and thus also in $L^\infty(0,T;V)$. From
  \eqref{eq:apri2} follows 
  \begin{align}\label{eq:u=u}
   \int_0^T    \norm {\tilde u_n -\bar u_n}_H^2\, \dv{t} = \tau_n^2\,
    \Big\|\frac{d \prescript{\eps_n}{} {\hat u}_{h_n}^{\tau_n}}{dt}\Big \|^2_{L^2(0,T;H)} \to 0\,,
  \end{align}
  which implies that also $\tilde u = \bar u$ in $L^\infty(0,T;V)$.

  Next, notice that \eqref{eq:phi-rg-new} can for all $v \in V_{h_n}$ and almost
  all $t \in (0,T)$ be re-written as
  \begin{align}
    \label{eq:ga1}
    \begin{aligned}
      &\Big \langle\frac {d \hat u_{n}  (t)}{dt},v\Big \rangle_V +
      \langle A \bar u_{n} (t) ,v\rangle _V + (\gH(\tilde u_n)(t)\bar u_n(t),
      v)_H
      \\
      &= (\bar f_n(t),v)_H+\langle E_n(t),v\rangle
      _V +\langle F_n(t),v\rangle _V , 
     \end{aligned}
  \end{align}
  where 
  \begin{align*}
    \langle E_n(t),v\rangle _V
    &:= \big(A_0(\nabla \bar u_n(t)) -  A_{\eps_n} (\nabla \bar u_n(t)), \nabla v \big )_H
      %\langle A \bar u_{n} (t)  -A_{\eps_n} \bar u_n(t),v\rangle _V\,,
    \\
    \langle F_n(t),v\rangle _V
    &:= \Big ( \frac{\vphi_{\eps _n}'(\abs{\nabla
      \bar u_n(t)})}{\abs{\nabla \bar u_n(t)}} \nabla \bar u_n(t) -\frac{\vphi_{\eps _n}'(\abs{\nabla
      \tilde u_n(t)})}{\abs{\nabla \tilde u_n(t)}} \nabla \bar u_n(t),
      \nabla v \Big )_H\,,
  \end{align*}
%  where $A_{\eps_n}:V\to V^*$ is the operator generated by
%  \eqref{eq:op-a} for $\alpha=\eps_n$ and
  where $\bar f_n$ is the piecewise constant interpolant generated by
  $(f_{h_n}(t_k^{\tau_n}))_{k=0,\ldots, K_n}$. Similarly to the
  derivation of \eqref{eq:cauchy} we obtain for an arbitrary
  $s\in (0,T)$, an arbitrary $k \in \N$, $m,n \ge k$, and all
  $v \in V_{h_k}$, all $\phi_s\in C^\infty_0(0,T)$ satisfying
  $\phi_s\equiv 1 $ in a neighborhood of $s$
  % . Let $k\in\N$ and let
  % $m,n\in \N $ be such that $M_n, M_m \ge k$. Multiplying
  % \eqref{eq:ga} for an arbitrary $v\in V_k$ by $\phi_s$,
  % integrating over $(0,s)$ with respect to $t$, using the integration
  % by parts formula and the properties of the Gelfand triple we obtain
  \begin{align*}
    &(\hat u_n(s)-\hat u_m(s),v)_H
    \\
    &=\int_0^s(\hat u_n(t)-\hat u_m(t),v)_H\,\phi'_s(t) 
      -\langle A\bar u_n(t)-A\bar u_m(t),v\rangle_V\,\phi_s(t)\,dt
    \\
    &\quad +\int_0^s \big(A_0(\nabla \bar u_n(t)) -  A_{\eps_n}
      (\nabla \bar u_n(t)), \nabla v \big )_H \, \phi_s(t)\,dt
    \\
    &\quad -\int_0^s \big(A_0(\nabla \bar u_m(t)) -  A_{\eps_m}
      (\nabla \bar u_m(t)), \nabla v \big )_H \, \phi_s(t)\,dt
      % \langle A\bar u_n(t)-A_{\eps_n}\bar u_n(t),v\rangle_V\,\phi_s(t)
      % -\langle A\bar u_m(t)-A_{\eps_m}\bar  u_m(t),v\rangle_V\, \phi_s(t)\,dt
    \\
    &\quad +\int_0^s \int_\Omega \Big (\frac{\vphi_{\eps _n}'(\abs{\nabla
      \bar u_n})}{\abs{\nabla \bar u_n}}\nabla \bar u_n-\frac{\vphi_{\eps _n}'(\abs{\nabla
      \tilde u_n})}{\abs{\nabla \tilde u_n}}\nabla \bar u_n \Big
      )\nabla v \,\dv{x}\,\phi_s(t)\, dt
    \\
    &\quad - \int_0^s\int _\Omega \Big (\frac{\vphi_{\eps _m}'(\abs{\nabla
      \bar u_m})}{\abs{\nabla \bar u_m}}\nabla \bar u_m-\frac{\vphi_{\eps _m}'(\abs{\nabla
      \tilde u_m})}{\abs{\nabla \tilde u_m}}\nabla \bar u_m \Big
      )\nabla v \,\dv{x}\,\phi_s(t)\, dt
    \\
    &\quad -\int_0^s  (\gH(\tilde u_n)(t)\bar u_n(t)-\gH(\tilde u_m)(t)\bar u_m(t),
      v)_H \,\phi_s(t)\,dt
    \\
    &\quad + \int_0^s( \bar f_n(t)-\bar f_m(t),v)_H \,\phi_s(t)\,dt
    \\
    &=:  I^{n,m}_1+I^{n,m}_2+I^{n}_3+I^{m}_4+I^{n}_5+I^{m}_6 + I^{n,m}_7+I^{n,m}_8\,.
  \end{align*}
  Since $\phi_s(\cdot)v \in L^\infty(0,T;V) \hookrightarrow
  L^\infty(0,T;L^{p^*}(\Omega))$ and $(p^*)'\le \frac{p^*}{r-1}$ due to
  $r\le p\frac{d+2}{2d}+1$, we deduce from \eqref{konv1} and $\bar
  f_n \to f$ in $L^2(0,T;H)$ that $I_1^{n,m}$, $I_2^{n,m}$,
  $I_7^{n,m}$ and $I_8^{n,m}$ converge
  to zero for $n,m \to \infty$. Using Lemma \ref{lem:eps} we get
  \begin{align}\label{eq:i3}
    \abs{I_3^n} \le c \vphi'(\eps _n) \int_0^s \int_\Omega \abs
    {\nabla v} \phi_s \,\dv{x}\, dt \to 0 \qquad \text {for } n\to \infty.  
  \end{align}
  In the same way we get that $I_4^m$ converges to zero for
  $m \to \infty$. There exists $\ell \in \N$ such that
  $(\ell-1)\tau_n <s\le \ell\tau_n$. Using the definition of $\bar u_u$,
  $\tilde u_n$, Lemma~\ref{lem:est},
  $\max _{t \in (0,T)}|\phi_s(t)| \le 1 $ and  Young's inequality  we get
  \begin{align}
    \begin{split}\label{eq:fe}
      \abs{I_5^n} &\le c\, \tau_n^2 \sum_{k=1}^\ell
      \int_\Omega\frac{\vphi_{\eps_n}'(|\nabla \prescript{\eps _n\hspace{-.25mm}}{}
        {u}^{k-1}_{h_n}|)}{|\nabla \prescript{\eps _n\hspace{-.25mm}}{}
        {u}^{k-1}_{h_n}|} \abs{\nabla d_{\tau_n} \prescript{\eps
          _n}{} {u}^{k}_{h_n}}\,\abs{\nabla v} \dv{x}
      \\
      &\le {\gamma(\eps _n)} \tau_n^2 \sum_{k=1}^\ell
      \int_\Omega\frac{\vphi_{\eps_n}'(|\nabla \prescript{\eps _n}{}
        {u}^{k-1}_{h_n}|)}{|\nabla \prescript{\eps _n}{}
        {u}^{k-1}_{h_n}|} \abs{\nabla d_{\tau_n} \prescript{\eps
          _n}{} {u}^{k}_{h_n}}^2\, \dv{x}
      \\
      &\quad + \frac c{\gamma(\eps _n)} \tau_n^2 \sum_{k=1}^\ell
      \int_\Omega\frac{\vphi_{\eps_n}'(|\nabla \prescript{\eps _n}{}
        {u}^{k-1}_{h_n}|)}{|\nabla \prescript{\eps _n}{}
        {u}^{k-1}_{h_n}|} \abs{\nabla v}^2\, \dv{x}
      \\
      &\le {\gamma(\eps _n)} E_{\vphi_{\eps_n}}[u^0]+ \frac
      {c\,\vphi''(\eps_n)}{\gamma(\eps _n)} \, \tau_n\, \|\nabla
      v\|^2_H
      \\
      &\le {\gamma(\eps _n)} E_{\vphi}[u^0]+ \frac
      {c\,\vphi''(\eps_n)}{\gamma(\eps _n)} \, \tau_n\, \|\nabla
      v\|^2_H,
    \end{split}
  \end{align}
  where we also used
  $\frac {\vphi '_\eps (t)}{t} \le \kappa_0^{-1}\vphi''(\eps)$ due to 
  (C2), (C3), and ${\vphi _\eps (t) \le \vphi(t)}$. Since
  $v \in W^{1,\infty}_0(\Omega)$, the terms in the last line of the
  previous estimate converge to zero since
  ${\tau_n = o(\vphi''(\veps_n)^{-1})}$ as then, e.g.,
  $\gamma^2(\veps_n) = \tau_n \vphi''(\veps_n)$ satisfies
  $\gamma(\veps_n) = o(1)$ and
  $\tau_n \vphi''(\veps_n)/ \gamma(\veps_n)= o(1)$ as $n \to
  \infty$. The term $I_6^m$ is treated analogously. 
  Since $\bigcup _{k\in \N}V_{h_{k}}$ is dense
  in $H$, we have shown that for every $s\in (0,T)$ the sequence
  $(\hat u_n(s))_{n\in \N}$ is a weak Cauchy sequence in $H$. Thus, for every
  $s \in (0,T)$ there exists $w(s) \in H$ such that
  $\hat u_n(s) \rightharpoonup w(s)$ in $H$. From this we deduce as in
  the proof of Theorem~\ref{maintheorem} that 
  for almost every $t \in (0,T)$
  \begin{align}\label{eq:point1}
    \hat u_n(t) \rightharpoonup \bar  u(t) \qquad \text{in } H\,.
  \end{align}
  However we need to prove $\bar u_n(t) \rightharpoonup \bar u(t)$ in
  $H$ for almost all $t \in (0,T)$. To this end we proceed as follows:
  We equip the set $V_{h_n}$, $n \in \N$, with the
  $W^{1,2}_0(\Omega)$-norm and denote this space by $X_n$.  For given
  $m\in \N$ let $n\ge m$ be arbitrary. Then we get, using that
  $\hat u_n(t)-\bar u_n(t)= d_{\tau_n} \prescript{\eps _n\hspace{-.25mm}}{} {u}
  _{h_n}^{k}(t -k\tau_n)$ on $I_{k}^{\tau_n}$
  \begin{align*}
    \|\hat u_n -\bar u_n \|_{L^1(0,T; X_m^\ast)}
    &\le     \|\hat u_n -\bar u_n \|_{L^{1}(0,T; X_n^\ast)}
    \\
    &= \sum_{k=1}^{K_n} \|d_{\tau_n}  \prescript{\eps _n\hspace{-.25mm}}{} {u}^k_{h_n}\|_{X^\ast_n}\int_{I^{\tau
      _n}_k} |t -k\tau_n|\, dt
    \\
    &= \frac {\tau_n}{2}\tau_n \sum_{k=1}^{K_n} \|d_{\tau_n}  \prescript{\eps _n\hspace{-.25mm}}{} {u}^k_{h_n}\|_{X_n^\ast}.
  \end{align*}
  Since $(V,H,V^*)$ and $(W^{1,2}_0(\Omega), H,(W^{1,2}_0(\Omega))^*)$ are Gelfand triples we get
  $\big( d_{\tau_n} \prescript{\eps _n\hspace{-.25mm}}{} {u}^k_{h_n},v \big
  )_H=\langle d_{\tau_n} \prescript{\eps _n\hspace{-.25mm}}{} {u}^k_{h_n},v \rangle
  _V = \langle d_{\tau_n} \prescript{\eps _n\hspace{-.25mm}}{} {u}^k_{h_n},v \rangle
  _{W^{1,2}_0(\Omega)}$ for $v \in W^{1,\infty}_0(\Omega)$. This and 
  \eqref{eq:phi-rg-new} yields 
  \begin{align*}
    \|d_{\tau_n}  \prescript{\eps _n\hspace{-.25mm}}{} {u}^k_{h_n}\|_{X_n^\ast}
    &= \sup_{v \in X_n\atop \|v\|_{W^{1,2}_0(\Omega)}
      \le 1}  \big (  d_{\tau_n}  \prescript{\eps _n\hspace{-.25mm}}{}
      {u}^k_{h_n},v\big ) 
    \\
    &= \sup_{v \in X_n\atop \|v\|_{W^{1,2}_0(\Omega)}
      \le 1} \Big [ - \big (  A_{\eps_n} (\nabla \prescript{\eps _n\hspace{-.25mm}}{} {u}_{h_n}^k), \nabla v \big )
      - (\h(\prescript{\eps _n\hspace{-.25mm}}{} {u}_{h_n}^{k-1})\, \prescript{\eps _n\hspace{-.25mm}}{} {u}_{h_n}^k,v)
    \\
    &\hspace{27mm}+ (f_h(t_k),v) + \big (\Fe_h^k, \nabla v  \big ) \Big ] .
  \end{align*}
  Using H\"older's inequality,
  $\frac {\vphi '_\eps (t)}{t} \le \kappa_0^{-1}\vphi''(\eps)$, the
  properties of $\vphi$ and Young's inequality we obtain
  \begin{align*}
    \bigabs{(  A_{\eps_n} (\nabla \prescript{\eps _n\hspace{-.25mm}}{} {u}_{h_n}^k), \nabla v )}
    & \le\Big ( \int_\Omega\frac{\big (\vphi_{\eps_n}'(|\nabla \prescript{\eps _n\hspace{-.25mm}}{}
        {u}^{k}_{h_n}|)\big )^2}{|\nabla \prescript{\eps _n\hspace{-.25mm}}{}
        {u}^{k}_{h_n}|^2} \abs{\nabla  \prescript{\eps _n\hspace{-.25mm}}{} {u}^{k}_{h_n}}^2 \dv{x}
      \Big )^{\frac 12} \|\nabla v\|_H
    \\
    & \le c\, \vphi''(\eps_n) \, \|\nabla v\|_H^2+  c \int_\Omega\vphi_{\eps_n}(|\nabla \prescript{\eps _n\hspace{-.25mm}}{}
        {u}^{k}_{h_n}|) \dv{x}  \,.
  \end{align*}
  Similarly as in \eqref{eq:gron} we get
  \begin{align*}
     \abs{(\h(\prescript{\eps _n\hspace{-.25mm}}{} {u}_{h_n}^{k-1})\, \prescript{\eps   _n\hspace{-.25mm}}{} {u}_{h_n}^k,v)}
    &\le c\big (1 +\|v\|_H^2 +\|\prescript{\eps _n\hspace{-.25mm}}{}
      {u}_{h_n}^{k-1}\|_V^p +\|\prescript{\eps _n\hspace{-.25mm}}{}
      {u}_{h_n}^{k}\|_V^p  \big )\,.
  \end{align*}
  From Assumption \ref{ass:data1} we conclude
  \begin{align*}
    \abs{(f_h(t_k),v) } \le \|v\|_H^2 + \|f_h(t_k)\|^2_H\,.
  \end{align*}
  Using Lemma~\ref{lem:est},  Young's inequality and $\frac {\vphi
    '_\eps (t)}{t} \le \kappa_0^{-1}\vphi''(\eps)$  we get
  \begin{align*}
    \bigabs{ (\Fe_h^k, \nabla v  \big ) }
    &\le c\, \tau_n \int_\Omega\frac{\vphi_{\eps_n}'(|\nabla \prescript{\eps _n\hspace{-.25mm}}{}
      {u}^{k-1}_{h_n}|)}{|\nabla \prescript{\eps _n\hspace{-.25mm}}{}
      {u}^{k-1}_{h_n}|} \abs{\nabla d_{\tau_n} \prescript{\eps _n\hspace{-.25mm}}{} {u}^{k}_{h_n}}\,\abs{\nabla v} \dv{x}
    \\
    &\le {\gamma(\eps _n)} \tau_n \int_\Omega\frac{\vphi_{\eps_n}'(|\nabla \prescript{\eps _n\hspace{-.25mm}}{}
      {u}^{k-1}_{h_n}|)}{|\nabla \prescript{\eps _n\hspace{-.25mm}}{}
      {u}^{k-1}_{h_n}|} \abs{\nabla d_{\tau_n} \prescript{\eps _n\hspace{-.25mm}}{} {u}^{k}_{h_n}}^2\, \dv{x}
    \\
    &\quad + \frac c{\gamma(\eps _n)} \tau_n
      \int_\Omega\frac{\vphi_{\eps_n}'(|\nabla \prescript{\eps _n\hspace{-.25mm}}{}
      {u}^{k-1}_{h_n}|)}{|\nabla \prescript{\eps _n\hspace{-.25mm}}{}
      {u}^{k-1}_{h_n}|} \abs{\nabla v}^2\, \dv{x}
    \\
    &\le {\gamma(\eps _n)} \tau_n\! \int_\Omega\!\frac{\vphi_{\eps_n}'(|\nabla \prescript{\eps _n\hspace{-.25mm}}{}
      {u}^{k-1}_{h_n}|)}{|\nabla \prescript{\eps _n\hspace{-.25mm}}{}
      {u}^{k-1}_{h_n}|} \abs{\nabla d_{\tau_n} \prescript{\eps _n\hspace{-.25mm}}{} {u}^{k}_{h_n}}^2\, \dv{x}
      +\frac {c\,\vphi''(\eps_n)}{\gamma(\eps _n)}  \tau_n \|\nabla
      v\|^2_H.
  \end{align*}
  Consequently, we proved
  \begin{align*}
    \|d_{\tau_n}  \prescript{\eps _n\hspace{-.25mm}}{} {u}^k_{h_n}\|_{X_n^\ast}
    & \le {\gamma(\eps _n)} \tau_n\! \int_\Omega\!\frac{\vphi_{\eps_n}'(|\nabla \prescript{\eps _n\hspace{-.25mm}}{}
      {u}^{k-1}_{h_n}|)}{|\nabla \prescript{\eps _n\hspace{-.25mm}}{}
      {u}^{k-1}_{h_n}|} \abs{\nabla d_{\tau_n} \prescript{\eps_n\hspace{-.25mm}}{} {u}^{k}_{h_n}}^2\, \dv{x}
      +\frac {c\,\vphi''(\eps_n)}{\gamma(\eps _n)}  \tau_n
    \\
    &\quad +c\, \vphi''(\eps_n) + c \int_\Omega\vphi_{\eps_n}(|\nabla \prescript{\eps _n\hspace{-.25mm}}{}
      {u}^{k}_{h_n}|) \dv{x}  +c\, \|\prescript{\eps _n\hspace{-.25mm}}{}
      {u}_{h_n}^{k-1}\|_V^p
    \\
    &\quad +c\,\|\prescript{\eps _n\hspace{-.25mm}}{} {u}_{h_n}^{k}\|_V^p +\|f_h(t_k)\|^2_H +c
  \end{align*}
  and thus 
  \begin{align*}
    &\|\hat u_n -\bar u_n \|_{L^1(0,T; X_m^\ast)}
    \\
    &\le c  {\tau_n}\,\tau_n \sum_{k=1}^{K_n}  \int_\Omega\vphi_{\eps_n}(|\nabla \prescript{\eps _n\hspace{-.25mm}}{}
      {u}^{k}_{h_n}|) \dv{x}  + c {\tau_n}\,\tau_n
      \sum_{k=1}^{K_n}\vphi''(\eps_n)     + c {\tau_n}\,\tau_n \sum_{k=1}^{K_n} \frac {\vphi''(\eps_n)}{\gamma(\eps _n)}  \tau_n 
    \\
    &\quad + c {\tau_n}\,\gamma(\eps_n)\,\tau_n^2 \sum_{k=1}^{K_n}\int_\Omega\!\frac{\vphi_{\eps_n}'(|\nabla \prescript{\eps _n\hspace{-.25mm}}{}
      {u}^{k-1}_{h_n}|)}{|\nabla \prescript{\eps _n\hspace{-.25mm}}{}
      {u}^{k-1}_{h_n}|} \abs{\nabla d_{\tau_n} \prescript{\eps
      _n}{} {u}^{k}_{h_n}}^2\, \dv{x} + c {\tau_n}\,\tau_n
      \sum_{k=1}^{K_n}1
    \\
    &\quad +  c {\tau_n}\,\tau_n
      \sum_{k=1}^{K_n} \|\prescript{\eps _n\hspace{-.25mm}}{}  {u}_{h_n}^{k-1}\|_V^p
      +  c {\tau_n}\,\tau_n
      \sum_{k=1}^{K_n}\|\prescript{\eps _n\hspace{-.25mm}}{}  {u}_{h_n}^{k}\|_V^p  +  {\tau_n}\,\tau_n
      \sum_{k=1}^{K_n} \|f_h(t_k)\|^2_H  \,.
  \end{align*}
  Using $\tau_n =o(\vphi''(\eps_n))$, the estimates
  \eqref{eq:ener_bound1} and  \eqref{eq:apri2}  as well as 
  Assumption \ref{ass:data1} we see that all terms on
  the right-hand side converge to zero for $n\to \infty$. A
  diagonal procedure implies for all $m\in \N$ and almost all
  $t \in (0,T)$ 
  \begin{align*}
    \hat u_n(t) -\bar  u_n(t) \to 0 \qquad \text{in } X_m^\ast\,,
  \end{align*}
  which together with \eqref{eq:point1}, the properties of the Gelfand
  triple with the spaces $W^{1,2}_0(\Omega)$, $L^2(\Omega)$, and $(W^{1,2}_0(\Omega))^*$  and
  the density of $\bigcup _{k\in \N}X_{k}$ in $H$ yields 
  \begin{align}
    \label{eq:point3}
    \bar u_n(t) \rightharpoonup \bar  u(t) \qquad \text{in } H\,.
  \end{align}  
  This and \eqref{eq:u=u} implies 
    \begin{align}
    \label{eq:point4}
    \tilde u_n(t) \rightharpoonup \bar  u(t) \qquad \text{in } H\,.
  \end{align}  

  {\it (iv) verification of condition \eqref{Hiranovoraussetzung}$_4$}:
%  To this end
  We first show that $\gH^*= \gH(\bar u)\bar u %= G^*= G(\bar u)
  $ in $L^\infty (0,T;L^{\frac{p^*}{r-1}}(\Omega))$. In view of \eqref{eq:apri2},
  \eqref{eq:point3} and \eqref{eq:point4} Proposition \ref{pro:comp}
  yields for all $s \in [1,\infty)$, $q\in [1,p^*)$
  \begin{align}
    \label{eq:conv-s}
    \bar u_n ,\tilde u_n \to \bar u \qquad \text { in } L^s(0,T;L^q(\Omega))\,.
  \end{align}
  Condition (H2) and the theory of Nemyckii operators yields
  (cf.~Lemma~\ref{lem:op}) that
  $\gH\colon L^q(0,T;L^q(\Omega)) \to L^{\frac q{r-2}}(0,T;L^{\frac
    q{r-2}}(\Omega))$, $q\ge \max \{1,r-2\}$, is bounded and
  continuous. This and \eqref{eq:conv-s} yields for all $q\in [\max \{1,r-2\},p^*)$
  \begin{align}
    \label{eq:conv-s1x}
    \gH(\bar u_n) , \gH(\tilde u_n) \to \gH(\bar u) \qquad \text { in } L^{\frac q{r-2}}(0,T;L^{\frac
    q{r-2}}(\Omega)) \,.
  \end{align}
  From this and \eqref{eq:conv-s} follows for all $q\in [\max \{1,r-1\},p^*)$
  \begin{align}
    \label{eq:conv-s1}
    \gH(\bar u_n)\bar u_n , \gH(\tilde u_n)\bar u_n \to \gH(\bar u)\bar u \qquad \text { in } L^{\frac q{r-1}}(0,T;L^{\frac
    q{r-1}}(\Omega)) \,,
  \end{align}
  which together with \eqref{konv1} proves $\gH^*= \gH(\bar u)\bar u
  % =G^*= G(\bar u)
  $ in $L^\infty (0,T;L^{\frac{p^*}{r-1}}(\Omega))$.
  Using \eqref{eq:ga1}, the integration by parts formula we obtain for
  all $\phi \in C_0^\infty (\R)$ and all $v\in X_{h_m}$, where
  $n \ge m$
  \begin{align*}
    &(\hat u_n(T),v)_H\phi(T)-(\hat u_n(0),v)_H\phi(0)\\
    &=\int_0^T(\hat u_n(t),v)_H\,\phi'(t) 
      -\big (\langle A\bar u_n(t),v\rangle_V- \langle
       E_n(t),v\rangle_V- \langle
      F_n(t),v\rangle_V\big )\phi(t) \,dt
    \\
    &\quad +\int_0^T \big ((\bar f_n(t),v)_H - (\gH(\tilde u_n)(t)\bar
      u_n(t),v)_H\big )\phi(t)\,\dv{t}\,. 
  \end{align*}
  Notice that the last two terms in the first line of the right-hand
  side converge to zero by similar arguments as in \eqref{eq:i3} and
  \eqref{eq:fe}. Further we have $\phi(\cdot)v \in L^\infty(0,T;V) \hookrightarrow
  L^\infty(0,T;L^{p^*}(\Omega))$ and $(p^*)'< \frac{p^*}{r-1}$, which
  holds due to $r\le p\frac{d+2}{2d}+1$ and $p>\frac{2d}{d+2}$.
  Thus, the convergences in \eqref{konv} and
  \eqref{eq:conv-s1}, the convergence $\bar f_n \to f $ in
  $L^{p'}(0,T;H)$, the identity of sets $X_{h_k}= V_{h_k}$, the
  density of $\bigcup _{k\in \N}V_{h_k}$ in $V$ and $H$, and $\bar u =\hat u$
  in $L^2(0,T;H)$ yield
  \begin{align*}
    &( u^\ast,v)_H\phi(T)-(u^0,v)_H\phi(0)\\
    &=\!\int_0^T\!\!(\bar u(t),v)_H\,\phi'(t) 
      +\big ( (f(t),v)_H-\langle A^\ast(t),v\rangle_V -(\gH(\bar u(t))\bar
      u (t),v)_H \big )\,\phi(t)      \,dt
  \end{align*}
  for all $\phi \in C_0^\infty (\R)$ and all $v\in V$. For $\phi
  \in C_0^\infty (0,T)$ this and the definition of the generalized
  time derivative together with $H\hookrightarrow V^*$ imply
  \begin{align}
    \label{eq:ut1}
    \frac{d\bar u }{dt} =  f - A^\ast-\gH(\bar u)\bar
      u \qquad \text{in } L^{p'}(0,T; V^\ast). 
  \end{align}
  Moreover, by standard arguments we get $\bar u \in C(\bar I; H)$,
  $u^\ast =\bar u(T)$, and
  $\hat u_n(T) =\bar u_n(T) \rightharpoonup \bar u(T)$ in $H$. Using
  \eqref{eq:ga1} for $v=\bar u_n(t)$ and
  \begin{align*}
      \Big \langle \frac {d \hat u_{n}  }{dt},\bar u_n\Big \rangle
    _{L^p(0,T;V)} = \tau _n \sum _{k=1}^{K_n} (d_\tau  u_{M_n}^k,
    u_{M_n}^k)_H \ge \frac 12 \|\bar u _n(T)\|_H^2 -
    \frac 12 \|u _n^0\|_H^2 
  \end{align*}
  we obtain with
  $\langle G_n(t),v\rangle _V:=\big ((\gH(\bar u)(t)\bar u (t)-(\gH(\tilde
  u)(t)\bar u (t),v\big )_H$
  \begin{align*}
    &\langle A \bar u_{n} +\gH(\bar u_n)\bar u_n  ,\bar u_n\rangle _{L^p(0,T;V)}
    \\
    &= \langle \bar f_{n}, \bar u_n\rangle_{L^p(0,T;H)}+\langle
      E_{n}+F_n+G_n, \bar u_n\rangle_{L^p(0,T;V)}- \Big \langle \frac {d \hat u_{n}  }{dt},\bar u_n\Big \rangle
    _{L^p(0,T;V)}  
    \\
    &\le \langle \bar f_{n}, \bar u_n\rangle_{L^p(0,T;H)}+\langle
      E_{n}+F_n+G_n, \bar u_n\rangle_{L^p(0,T;V)} + \frac 12
      \|u _n^0\|_H^2 - \frac 12 \|\bar u _n(T)\|_H^2 .
  \end{align*}
  Similarly as in \eqref{eq:i3} and \eqref{eq:fe} we obtain
  \begin{align*}
    &\abs{\langle E_n, \bar u_n\rangle _{L^p(0,T;V)}}
    \le c\vphi  '(\eps _n)\int_0^T\int_\Omega \abs{\nabla \bar u_n}
      \dv{x}\, dt \to 0 \qquad n\to \infty,
    \\
    &\abs{\langle F_n, \bar u_n\rangle _{L^p(0,T;V)}}
      \\&\le {\gamma(\eps _n)} \tau_n^2 \sum_{k=1}^\ell
      \int_\Omega\frac{\vphi_{\eps_n}'(|\nabla \prescript{\eps _n\hspace{-.25mm}}{}
        {u}^{k-1}_{h_n}|)}{|\nabla \prescript{\eps _n\hspace{-.25mm}}{}
        {u}^{k-1}_{h_n}|} \abs{\nabla d_{\tau_n} \prescript{\eps
          _n\hspace{-.25mm}}{} {u}^{k}_{h_n}}^2\, \dv{x}
      \\
      &\quad + \frac c{\gamma(\eps _n)} \tau_n^2 \sum_{k=1}^\ell
      \int_\Omega\frac{\vphi_{\eps_n}'(|\nabla \prescript{\eps _n\hspace{-.25mm}}{}
        {u}^{k-1}_{h_n}|)}{|\nabla \prescript{\eps _n\hspace{-.25mm}}{}
        {u}^{k-1}_{h_n}|} \abs{\nabla \prescript{\eps _n\hspace{-.25mm}}{}
        {u}^{k}_{h_n}}^2\, \dv{x}
      \\
      &\le {\gamma(\eps _n)}\Big ( E_{\vphi}[u^0]  +
        \|f\|_{L^2(0,t;H)}^2+(\eps_n^p +\delta^p_n)T\abs{\Omega} +o(1)\Big )
    \\
    &\quad + \frac
      {c\,\tau_n}{\gamma(\eps _n)} \Big (E_{\vphi}[u^0] +\|u^0\|_H
        +(\eps_n^p +\delta^p_n)T\abs{\Omega} +o(1)\Big)
        \to 0 \quad n\to \infty,
  \end{align*}
  where we used that $L^p(0,T;V) $ embeds into $L^1(0,T;V)$; the
  properties of $\vphi$, \eqref{eq:apriori}, \eqref{eq:ener_bound1}, %\eqref{eq:apriori3}
  the choice $\tau_n =o(\vphi''(\eps_n)^{-1})$ and Assumption~\ref{ass:data1}. In view of
  \eqref{eq:conv-s1} and \eqref{konv1}  we get 
  $\abs{\langle G_n, \bar u_n\rangle _{L^p(0,T;V)}}\to 0$. Thus
  \eqref{konv1}, $\bar f_n \to f $ in
  $L^{p'}(0,T;H)$ and the lower weak semicontinuity of the norm imply
  \begin{align*}%\label{eq:ung}
      \limsup _{n\to \infty} \langle A \bar u_{n} (t) +\gH(\bar u_n)\bar u_n  ,\bar u_n\rangle _{L^p(0,T;V)} 
    &\le \langle f, \bar u\rangle_{L^p(0,T;H)} +
      \frac 12
      \|u ^0\|_H^2 - \frac 12 \|\bar u (T)\|_H^2.
  \end{align*}
  From \eqref{eq:ut1}, the integration by parts formula and
  \eqref{konv1}, \eqref{eq:conv-s1} we get
  \begin{align*}
     \langle f, \bar u\rangle_{L^p(0,T;H)}
    = \frac 12 \|\bar u
    (T)\|_H^2 -\frac 12 \|u ^0\|_H^2 + \lim_{n \to \infty}
    \langle A \bar u_{n}+\gH(\bar u_n)\bar u_n  ,\bar u\rangle _{L^p(0,T;V)} \,. 
  \end{align*}
  The last two inequalities imply that also condition
  \eqref{Hiranovoraussetzung}$_4$ is satisfied.

  Thus, we have verified all conditions in \eqref{Hiranovoraussetzung}
  and consequently Proposition~\ref{Hirano} together with
  \eqref{konv1} implies $A^\ast +H^\ast= A\bar u +\gH(\bar u)\bar u$
  in $L^{p'}(0,T;V^\ast)$. This and \eqref{eq:ut1} yield
  \begin{align*}
    \frac{d\bar u }{dt}  + A\bar u +\gH(\bar u)\bar u= f  \qquad \text{in } L^{p'}(0,T;
    V^\ast),
  \end{align*}
  i.e.~$\bar u $ is a solution of \eqref{eq:mp}.  
\end{proof}

\begin{remark}
  For $p=2$ we have to distinguish between the cases $d=2$ and $d\ge
  3$. In the latter one Theorem~\ref{maintheorem1} holds as stated and
  also the proof is the same. If $d =2$ the embedding
  $W^{1,2}_0(\Omega) \hookrightarrow L^{s}(\Omega)$, $s\in [1,\infty)$
  is different from the other cases we considered. Thus, estimate
  \eqref{eq:gron} has to be adapted and results in the restriction
  $r<3$. Consequently, in Theorem~\ref{maintheorem1} we have to
  require $r \in (2,3)$ if $p=2$ and $d=2$. 
\end{remark}

\bibliographystyle{amsalpha}
\bibliography{bib_project}

\end{document}